\newtheorem{theorem}{Theorem}[section]
\newtheorem*{theorem*}{Theorem}
\newtheorem{corollary}[theorem]{Corollary}
\newtheorem{proposition}{Proposition}[section]
\numberwithin{equation}{section}
\begin{document}
	\title[An extension of Schur's theorem ]{A Schur's theorem via a monotonicity and the expansion module}

\author{Lei Ni}
\address{Lei Ni. Department of Mathematics, University of California, San Diego, La Jolla, CA 92093, USA}
\email{leni@ucsd.edu}


\subjclass[2010]{}

\begin{abstract} In this paper we  present a monotonicity which extends a classical theorem of A. Schur comparing the chord length of a convex plane curve with that of a space curve of smaller curvature. We also prove a Schur's Theorem for  spherical curves, which extends the Cauchy's Arm Lemma.
\end{abstract}

\maketitle

\section{Introduction}
For a convex curve $c(s): [0, L]\to \mathbb{R}^2$  and a smooth curve in $\tilde{c}(s): [0, L]\to \mathbb{R}^3$ of the same length (both parametrized by the arc-length), A. Schur's theorem \cite{Hopf} (Theorem A page 31, see also \cite{Chern}) asserts that {\it  if both curves are embedded, and the curvature of the space curve $\tilde{k}(s):= |\tilde{T}'|(s)$, where $\tilde{T}(s)={\tilde{c}}'(s)$ is the tangent vector,  is not greater than the curvature $k(s)$ of the convex curve, then $d_{\mathbb{R}^3}(\tilde{c}(0), \tilde{c}(L))\ge d_{\mathbb{R}^2}(c(0), c(L))$}. Here $c'(s)$ denotes $\frac{d}{ds} c(s)$. From the proof of \cite{Hopf} it is easy to see $\mathbb{R}^3$ can be replaced by $\mathbb{R}^{n+1}$ with any $n\ge 1$.

The theorem can be proven for curves whose tangents have  finite discontinuous jumps, and to the situation that the curvature of the smaller curve $\tilde{c}(s)$ is a curve in $\mathbb{R}^{n+1}$ for $n\ge 1$. For each $1\le j\le N-1$ at the point $c(s_j)$ the oriented turning angles (say counter-clock wisely) is measured by signed distance $\alpha_j:=d_{\mathbb{S}^n}(c'(s_{j}-), c'(s_{j}+))>0$ (here note that $\{c'(s)\}$ is viewed as points in a great circle $\mathbb{S}^1$ inside $\mathbb{S}^n$ and an orientation is given based on the region enclosed by $c(s)$ and the chord $\overline{c(0)c(L)}$ is on the left side of $c'(s)$). The turning angle $\tilde{\alpha}_j$ is measured simply by $\tilde{\alpha}_j=d_{\mathbb{S}^n}(\tilde{c}'(s_{j}-), \tilde{c}'(s_{j}+))$. In terms of the  generalization to curves with finite discontinuous points for the tangent, it assumes that there exists $\{s_j\}_{0\le j\le N}$ such that $0=s_0<s_1< \cdots <s_k < \cdots <s_{N}=L$ such that both $c(s)$ and $\tilde{c}(s)$ are regular embedded curves for $s\in (s_{j-1}, s_{j})$ for all $1\le j\le N$ satisfying $k(s)\ge \tilde{k}(s)$ and  above defined $\alpha_j$ and $\tilde{\alpha}_j$ satisfy that $\alpha_j\ge \tilde{\alpha}_j$ for all $1\le j\le N-1$. The convexity of $c(s)$ and the simpleness assumption imply that $\alpha_j\in (0, \pi)$ and
\begin{equation}\label{eq:1-1}
\sum_{j=1}^N \int_{s_{j-1}}^{s_j} k(s)\, ds +\sum_{j=1}^{N-1}\alpha_j \le 2\pi.
\end{equation}

This extension, together with some ingenious applications of the hinge's theorem, allows one to prove the famous Cauchy's Arm Lemma for geodesic arms in the unit sphere (consisting of continuous broken great/geodesic arcs with finite jumps of the tangents) in Lemma II  on the  pages 37--38 of \cite{Hopf}. The Lemma became famous due to that it had an incomplete/false proof by Cauchy originally \cite{Cauchy}. The corrected proof appeared in \cite{Alex, SR}. This spherical Cauchy's Arm Lemma can also be proved by an induction argument \cite{SZ}, whose idea in fact in part resembles the proof of  the smooth case to some degree. Note that this lemma of Cauchy plays a crucial role in the rigidity of convex polyhedra in $\mathbb{R}^3$, which finally was vastly generalized to  convex surfaces (convex bodies enclosed) as the famous Pogorelov monotypy theorem (cf.  \cite{Busemann} Section 21).

 The Schur's theorem also can be applied to prove the four-vertex theorem for convex plane curves,  besides implying a Theorem of H. A. Schwartz which asserts: {\it For any curve $c$ of length $L$ with curvature $k(s)\le 1/r$, let $C$ be the circle passing $c(0)$ and $c(L)$ of radius $r$, then $L$ is either not greater than the length of the lesser circular arc, or not less  than the length of the greater circular arc of $C$. } High dimensional (intrinsic) analogues of A. Schur's theorem include the Rauch's comparison theorem and  the Toponogov comparison theorem. The later  however has the limit of  requiring that the manifold with less curvature must be a space form of constant sectional curvature.

First we have the following slightly more general version of Schur's theorem in terms of a monotonicity.

 \begin{theorem}\label{thm:main1} Let $c: [0, L]\to \mathbb{R}^2$ be a simple piece wisely regular convex plane curve with curvature $k(s)\ge 0$ and finite many discontinuities for the tangent at $\{s_j\}_{j=1}^{N-1}$. Let $\tilde{c}: [0, L]\to \mathbb{R}^{n+1}$ ($n\ge 1$) be another simple  curve such that $\tilde{k}(s)=|\tilde{T}'|(s)\le k(s)$. Moreover we assume that the turning angles of $\{\alpha_j\}$ and $\tilde{\alpha}_j$ satisfy $\alpha_j\ge \tilde{\alpha}_j$.  Then for any $0\le s'<s''\le L$ there exists a linear  isometric map $\iota_{s', s''}: \mathbb{R}^2\to \mathbb{R}^{n+1}$ with $\iota_{s', s''}(0)=0$ such that
 $$
 I(s):=\langle \tilde{c}(s)-\iota_{s', s''}(c(s)),  \iota_{s', s''}(c(s'')-c(s'))\rangle
 $$
 is monotone non-decreasing for $s\in [s', s'']$, or equivalently
 \begin{equation}\label{eq:10}
 \langle \tilde{T}(s)-\iota_{s', s''}(T(s)), \iota_{s', s''}(c(s'')-c(s'))\rangle \ge 0,\quad  \forall\, s \in [s', s'']\setminus \cup_{k=1}^{N-1}\{s_j\}.
 \end{equation}
  \end{theorem}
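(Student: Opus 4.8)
The plan is to verify the infinitesimal form \eqref{eq:10} directly, and then read off the monotonicity. Fix $0\le s'<s''\le L$ and write $w:=c(s'')-c(s')\in\mathbb{R}^2$. Since any isometric inclusion with $\iota_{s',s''}(0)=0$ is linear and $\iota_{s',s''}(w)$ is independent of $s$, differentiating $I(s)$ gives $I'(s)=\langle \tilde T(s)-\iota_{s',s''}(T(s)),\,\iota_{s',s''}(w)\rangle$, so that ``$I$ is non-decreasing'' and the inequality \eqref{eq:10} are literally the same statement. Moreover, because $\iota_{s',s''}$ preserves inner products, $\langle \iota_{s',s''}(T(s)),\iota_{s',s''}(w)\rangle=\langle T(s),w\rangle$, and hence it suffices to construct $\iota_{s',s''}$ so that $\langle \tilde T(s),\iota_{s',s''}(w)\rangle\ge \langle T(s),w\rangle$ for every $s\in[s',s'']$.

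The construction of $\iota_{s',s''}$ will be anchored at a distinguished \emph{parallel point}. Let $\theta(s)$ denote the turning angle of $c$, so $\theta'=k\ge 0$ and $\theta$ is monotone, and let $\omega$ be the direction of $w$. The sub-arc $c|_{[s',s'']}$ together with the chord $w$ bounds a convex region (a convex cap of the embedded convex curve), whose two exterior angles at $c(s')$ and $c(s'')$ lie in $[0,\pi)$. Reading off these two exterior angles yields $\theta(s')=\omega-\Lambda_-$ and $\theta(s'')=\omega+\Lambda_+$ with $\Lambda_\pm\in(0,\pi]$; in particular $\omega$ lies strictly between the endpoint tangent directions, so by continuity there is $s_0\in(s',s'')$ with $T(s_0)=w/|w|$, and the turning from $s_0$, namely $\psi(s):=\angle(T(s),T(s_0))=|\theta(s)-\omega|$, satisfies $\psi(s)\le\max(\Lambda_+,\Lambda_-)\le\pi$ for all $s$. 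I then define $\iota_{s',s''}$ to be any linear isometry $\mathbb{R}^2\to\mathbb{R}^{n+1}$ with $\iota_{s',s''}(T(s_0))=\tilde T(s_0)$ (extended arbitrarily on the orthogonal complement), so that $\iota_{s',s''}(w)=|w|\,\tilde T(s_0)$.

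With this choice the desired inequality becomes a cosine comparison. On the one hand $\langle T(s),w\rangle=|w|\cos\psi(s)$ with $\psi(s)\in[0,\pi]$. On the other hand $\langle \tilde T(s),\iota_{s',s''}(w)\rangle=|w|\,\langle \tilde T(s),\tilde T(s_0)\rangle=|w|\cos\tilde\psi(s)$, where $\tilde\psi(s)=d_{\mathbb{S}^n}(\tilde T(s),\tilde T(s_0))$ is the spherical distance between the two points of the tangent indicatrix. Bounding this distance by the length of the indicatrix between $s_0$ and $s$, which equals $\int \tilde k$, and using $\tilde k\le k$, I get $\tilde\psi(s)\le \int \tilde k\le \int k=\psi(s)\le\pi$. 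Since $\cos$ is decreasing on $[0,\pi]$ and $0\le\tilde\psi(s)\le\psi(s)\le\pi$, this forces $\cos\tilde\psi(s)\ge\cos\psi(s)$, i.e. $\langle \tilde T(s),\iota_{s',s''}(w)\rangle\ge\langle T(s),w\rangle$, which is exactly \eqref{eq:10}, and therefore $I'(s)\ge 0$.

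The \textbf{main obstacle} is the step of the second paragraph: the existence of a tangent positively parallel to the chord together with the one-sided bounds $\Lambda_\pm\le\pi$. This is precisely where both convexity \emph{and} embeddedness are used essentially, and I expect the careful bookkeeping of the two exterior angles of the convex cap to be the delicate point. It cannot be bypassed: an arc with $k\ge 0$ that turns by more than $\pi$ may, if permitted to self-intersect, fail to have any tangent pointing in the chord direction, in which case the parallel point $s_0$ does not exist and the bound $\psi\le\pi$ breaks down; so the simpleness hypothesis is genuinely needed. Once $s_0$ and $\Lambda_\pm\le\pi$ are secured, the remainder is the classical Schur cosine estimate carried out above.
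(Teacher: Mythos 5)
Your proof is correct and follows essentially the same route as the paper's: you locate the point $s_0$ where the tangent is parallel to the chord (the paper's $s_*$), choose the isometric inclusion so that $\iota_{s',s''}(T(s_0))=\tilde T(s_0)$, bound the turning on each side of $s_0$ by $\pi$ using convexity and embeddedness, and conclude via $d_{\mathbb{S}^n}(\tilde T(s),\tilde T(s_0))\le\int\tilde k\le\int k=\psi(s)\le\pi$ together with monotonicity of cosine on $[0,\pi]$. The only differences are cosmetic --- you use the intermediate value theorem on the turning angle and the exterior angles of the convex cap where the paper uses the mean value theorem on the cone over the tangent image and its angle-sum identities, and the paper additionally carries the bookkeeping for tangents with finite jumps, which the smooth statement as given does not require.
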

There is a freedom of the linear isometry by a rotation fixing the vector $\iota_{s', s''}(c(s'')-c(s')$. As  $s'\to s''$, when $s''$ is a smooth point, the linear isometric embedding $\iota_{s', s''}$ converges to one  identifying $T(s)$ with $\tilde{T}(s)$ factoring this freedom.
\begin{corollary} Under the same assumption as in the theorem,  for any $s'\le s'_*<s''_{*}\le s''$,
 \begin{equation}\label{eq:101}
 \langle c(s''_*)-c(s'_*), c(s'')-c(s')\rangle\le \langle\tilde{c}(s''_*)-\tilde{c}(s'_*), \iota_{s', s''}(c(s'')-c(s'))\rangle.
 \end{equation}
 When $s'=s'_*$ and $s''=s''_*$ we have that
 \begin{equation}\label{eq:11}
 |c(s'')-c(s')|^2 \le \langle\tilde{c}(s'')-\tilde{c}(s'), \iota_{s', s''}(c(s'')-c(s'))\rangle.
 \end{equation}
 The equality holds if and only if $\iota (c(s))|_{[s', s'']}$ and $\tilde{c}(s)|_{[s', s'']}$ are the same.
\end{corollary}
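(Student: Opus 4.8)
The plan is to obtain both inequalities directly from the monotonicity of $I$ supplied by Theorem~\ref{thm:main1}, the only extra ingredient being that the map $\iota_{s',s''}$ preserves the Euclidean inner product. Throughout write $\iota=\iota_{s',s''}$ and abbreviate $w=\iota\big(c(s'')-c(s')\big)\in\mathbb{R}^{n+1}$. First I would simply expand the increment $I(s''_*)-I(s'_*)$ from the definition of $I$. Because both terms pair against the common second slot $w$, the subtraction collapses to
$$
I(s''_*)-I(s'_*)=\big\langle \big(\tilde{c}(s''_*)-\tilde{c}(s'_*)\big)-\iota\big(c(s''_*)-c(s'_*)\big),\,w\big\rangle.
$$

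The key step is then to separate this into the $\tilde{c}$-contribution and the $\iota$-contribution. Since $\iota$ is an isometric inclusion with $\iota(0)=0$, it is a linear isometry onto its image (a Euclidean isometry fixing the origin is orthogonal), and hence $\langle\iota(u),\iota(v)\rangle=\langle u,v\rangle$ for all $u,v\in\mathbb{R}^2$. Applying this identity to the $\iota$-contribution rewrites $\langle\iota(c(s''_*)-c(s'_*)),\,\iota(c(s'')-c(s'))\rangle$ as the planar inner product $\langle c(s''_*)-c(s'_*),\,c(s'')-c(s')\rangle$, so that
$$
I(s''_*)-I(s'_*)=\big\langle \tilde{c}(s''_*)-\tilde{c}(s'_*),\,w\big\rangle-\big\langle c(s''_*)-c(s'_*),\,c(s'')-c(s')\big\rangle.
$$

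Since $s',s''\in[0,L]$ with $s'\le s'_*<s''_*\le s''$, the points $s'_*,s''_*$ lie in $[s',s'']$, so the monotonicity of $I$ on $[s',s'']$ gives $I(s''_*)-I(s'_*)\ge 0$; rearranging the last display is exactly \eqref{eq:101}. Finally, taking $s'_*=s'$ and $s''_*=s''$ collapses the planar cross term to $\langle c(s'')-c(s'),\,c(s'')-c(s')\rangle=|c(s'')-c(s')|^2$, which yields \eqref{eq:11}. I do not expect a genuine obstacle: the corollary is a formal consequence of the theorem, and the single point demanding care is invoking the linearity of $\iota$ to identify the cross term with the inner product of the chord in the plane.
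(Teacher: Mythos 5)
Your proof is correct and matches the paper's route: the paper likewise obtains \eqref{eq:11} by integrating the pointwise inequality \eqref{eq:10} over $[s',s'']$ (equivalently, using $I(s'')\ge I(s')$), which is exactly your computation $I(s''_*)-I(s'_*)\ge 0$ specialized to the endpoints, and \eqref{eq:101} is the same deduction on the subinterval $[s'_*,s''_*]$. Your explicit remark that $\iota_{s',s''}$, being an isometric inclusion fixing the origin, preserves inner products is the same fact the paper uses implicitly when identifying $\langle c(s'')-c(s'),\mathcal{N}\rangle$ with $|c(s'')-c(s')|$ after the inclusion.
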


The estimate (\ref{eq:11}) implies Schur's theorem by the Cauchy-Schwarz inequality applied to the right hand side of (\ref{eq:11}):
$$
|c(s'')-c(s')|\le |\tilde{c}(s'')-\tilde{c}(s')|, \quad \forall\,  0\le s'< s''\le L.
$$
 This extension allows  one to rephrase the result in terms of  the concept of the {\it expansion module} \cite{An, Ni-JMPA} of vector fields. If $X: \Omega\subset \mathbb{R}^{n+1} \to \mathbb{R}^{n+1}$ is a vector field defined on a convex domain, then the expansion module is a function of one variable $\psi(t)$ such that
$$
\langle X(y)-X(x), \frac{y-x}{|y-x|}\rangle \ge 2\psi\left(\frac{|x-y|}{2}\right).
$$
Since $\tilde{c}(s)$ and $c(s)$ are related via the parameter $s$, one may view $\tilde{c}$ as a related vector field defined over $\iota_{s', s''}(c(s))\in \mathbb{R}^{n+1}$. Now the estimate in Theorem \ref{thm:main1} simply asserts that the related vector fields $\tilde{c}(s)$ has an expansion  module function $\psi(t)=t$ with respect to the associated vector $\iota_{s', s''}(c(s))$.

From the above   connection between  the concept of {\it curvature} and the {\it expansion module} it is our  hope that a high dimensional Schur's theorem  could be discovered through the consideration involving the expansion module.

Given that Schur's theorem implies the Cauchy's Arm Lemma for the arms of great arcs in the unit sphere, a natural question is that if the spherical analogue of Schur's theorem still holds. Namely, given two embedded spherical curves $c(s)$ and $\tilde{c}(s)$ in the unit sphere $\mathbb{S}^2\subset \mathbb{R}^3$ parametrized by the arc-length $s\in [0, L]$ with $L\le \pi$. Assume that $c(s)$ is convex with geodesic curvature $k(s)\ge 0$, and  that the geodesic curvature of $\tilde{c}$ satisfies  $|\tilde{k}|(s)\le k(s)$. Does it still hold that $d_{\mathbb{S}^2}(c(0), c(L))\le d_{\mathbb{S}^2}(\tilde{c}(0),\tilde{c}(L))$? One could also allow the tangent of curves to have same amount of finite many jumps at $\{s_j\}$. In that case, at each $s_j$, the oriented angle $\alpha_j\in (0, \pi)$ and the turning angle $\tilde{\alpha}_j=d_{\mathbb{S}^2}(\tilde{T}(s_j-), \tilde{T}(s_j+))$ are assumed to satisfy that $\alpha_j\ge \tilde{\alpha}_j$ as in the previous case. The Cauchy's Arm Lemma in the sphere provides a positive answer in the special case where both curves have zero geodesic curvature for the smooth parts. Here we confirm this conjecture by proving

\begin{theorem}\label{thm:main2} The Schur's theorem holds for two curves in $\mathbb{S}^2\subset \mathbb{R}^3$ under the above configurations similar to that of Theorem \ref{thm:main1}. The equality holds if and only if the two curves are congruent by an isometry of $\mathbb{S}^2$.
\end{theorem}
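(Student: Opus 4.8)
The plan is to reduce the smooth statement to a broken-geodesic (polygonal) comparison and to deduce the latter as an extension of Cauchy's Arm Lemma, which the introduction already identifies as the special case of vanishing geodesic curvature. Since $c(0),c(L),\tilde{c}(0),\tilde{c}(L)$ all lie on the unit sphere and $d_{\mathbb{S}^2}(c(0),c(L))\le L\le\pi$, comparing the Euclidean chords is equivalent to comparing the intrinsic distances, because $|x-y|=2\sin\big(d_{\mathbb{S}^2}(x,y)/2\big)$ is increasing on $[0,\pi]$; so it suffices to prove $d_{\mathbb{S}^2}(c(0),c(L))\le d_{\mathbb{S}^2}(\tilde{c}(0),\tilde{c}(L))$.

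First I would fix a partition $0=s_0<\cdots<s_N=L$ (refining the prescribed jump points $\{s_j\}$) and inscribe the geodesic polygons $P=(c(s_0),\dots,c(s_N))$ and $\tilde{P}=(\tilde{c}(s_0),\dots,\tilde{c}(s_N))$, recording for each the edge lengths $\ell_i,\tilde\ell_i$ and the exterior (turning) angles $\alpha_i,\tilde\alpha_i$. Two asymptotic comparisons drive everything. On the one hand, the third-order expansion of the intrinsic distance on the unit sphere gives $\ell_i=\Delta s_i-\frac{k(s_i)^2}{24}\Delta s_i^3+o(\Delta s_i^3)$ and the same with $\tilde{k}$, so $|\tilde{k}|\le k$ forces $\tilde\ell_i\ge\ell_i$ up to higher order: the comparison curve carries the longer edges. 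On the other hand each turning angle satisfies $\alpha_i=k(s_i)\Delta s_i+o(\Delta s_i)$ and $|\tilde\alpha_i|=|\tilde{k}(s_i)|\Delta s_i+o(\Delta s_i)$, whence $|\tilde\alpha_i|\le\alpha_i$ up to higher order, while convexity of $c$ makes every $\alpha_i\ge 0$. Both effects, smaller turning and longer edges, push the endpoint distance of $\tilde{P}$ upward, which is exactly the direction we need; the $o(\cdot)$ errors, being summable to $o(1)$ over the $N\sim 1/\Delta s$ vertices, disappear under refinement, and the endpoint distances of the inscribed polygons converge to those of the smooth curves.

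The core is then the polygonal statement: if $P$ is a convex spherical geodesic polygon whose total turning plus perimeter obeys the Gauss–Bonnet bound analogous to (\ref{eq:1-1}), and $\tilde{P}$ is any spherical geodesic polygon with $\tilde\ell_i\ge\ell_i$ and $|\tilde\alpha_i|\le\alpha_i$, then $d_{\mathbb{S}^2}(\tilde p_0,\tilde p_N)\ge d_{\mathbb{S}^2}(p_0,p_N)$. I would prove this by induction on $N$ using the spherical hinge theorem (the monotonicity in the law of cosines: for two geodesic sides of fixed lengths the opposite side increases with the included angle). The induction first replaces $\tilde{P}$ by the most curled competitor, namely the polygon with the same edges $\tilde\ell_i$ but all turning angles equal to $+\alpha_i$; since increasing the turning can only shorten the endpoint distance, this competitor gives a lower bound for $d_{\mathbb{S}^2}(\tilde p_0,\tilde p_N)$, so it suffices to treat it. One then compares this competitor with $P$ vertex by vertex, where at each step the longer edge with equal included angle enlarges the running chord via the hinge theorem, removing one vertex while the convexity and total-turning bound are inherited by the sub-polygon.

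The main obstacle is keeping the induction inside the regime where the spherical hinge comparison has the correct sign. Unlike the plane, the spherical triangle inequalities behave monotonically only while the configuration stays within (essentially) a hemisphere, so the Gauss–Bonnet-type bound on the total turning and perimeter of $P$ (inherited from $L\le\pi$ and convexity) must be shown to persist for every sub-polygon produced by the reduction, guaranteeing that the accumulated chord never exceeds $\pi$ and that convexity is preserved each time a vertex is removed. A secondary difficulty is the rigorous justification that curling all turns to the same sign with the maximal magnitudes $\alpha_i$ genuinely minimizes the endpoint distance among all competitors with $|\tilde\alpha_i|\le\alpha_i$, the spherical form of the principle that a monotone convex arc minimizes the span, which again relies on remaining within the hemisphere regime. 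Once these two points are secured, the limit over refining partitions yields Theorem \ref{thm:main2}, the prescribed angle hypotheses $\alpha_j\ge\tilde\alpha_j$ at the jumps $\{s_j\}$ being incorporated simply as fixed common vertices of both polygons.
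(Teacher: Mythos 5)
Your reduction to a polygonal comparison is reasonable in outline, but the polygonal lemma you rely on is false as stated, and the step you would need in its place is essentially the theorem itself. The lemma claims that $\tilde\ell_i\ge\ell_i$ together with $|\tilde\alpha_i|\le\alpha_i$ forces $d(\tilde p_0,\tilde p_N)\ge d(p_0,p_N)$, and your induction justifies the edge-length part by asserting that ``the longer edge with equal included angle enlarges the running chord via the hinge theorem.'' The hinge theorem gives monotonicity in the \emph{angle}, not in the side lengths, and monotonicity in a side length fails as soon as a turning angle is obtuse: in the plane take $p_0=(0,0)$, $p_1=(\ell_1,0)$ with $\ell_1=0.1$, turning angle $\alpha_1=3\pi/4$, and a second edge of length $1$; then $|p_2-p_0|^2=\ell_1^2+2\ell_1\cos(3\pi/4)+1$ is \emph{decreasing} in $\ell_1$ for $\ell_1<\cos(\pi/4)$, so lengthening the first edge from $0.1$ to $0.5$ while keeping the same turning angle shrinks the endpoint distance from about $0.93$ to about $0.74$; the same happens on a small spherical cap. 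Since this $P$ is convex with total turning $3\pi/4<\pi$ and perimeter less than $\pi$, it satisfies your Gauss--Bonnet-type hypothesis, so the core lemma is genuinely false, not merely unproven. The edge excess $\tilde\ell_i-\ell_i$ (which is indeed the direction $|\tilde k|\le k$ gives you) therefore cannot serve as a helping hypothesis; it must be demoted to an error term, e.g.\ by comparing $\tilde P$ with the polygon having edges exactly $\ell_i$ and angles $\tilde\alpha_i$, and invoking the $1$-Lipschitz dependence of the endpoint distance on edge lengths, the total discrepancy being $\sum_i O(\Delta s_i^3)=O(\Delta s^2)$.

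Even after that repair, the remaining core step --- that among spherical geodesic polygons with fixed edge lengths and $|\tilde\alpha_i|\le\alpha_i$ the fully curled one (all turning angles equal to $+\alpha_i$) minimizes the endpoint distance --- is precisely the spherical Schur theorem for broken geodesics, i.e.\ the special case of Theorem \ref{thm:main2} in which both curves are piecewise great arcs; since the theorem as configured allows corners, this is not an auxiliary fact but an instance of the statement being proved. You correctly identify the danger (the hinge-at-one-vertex argument breaks when a chord angle wraps past $0$ or $\pi$, which is exactly the gap in Cauchy's original argument), but you do not close it, so the proposal is circular at its center. By contrast, the paper's proof avoids polygonal combinatorics altogether: it projects both curves radially onto a plane, setting $\mathcal{P}_c(s)=R(s)c(s)$ and $\tilde{\mathcal{P}}_{\tilde c}(s)=R(s)\tilde c(s)$, observes that these share the same arc-length parameter (\ref{eq:35}) and that the curvature comparison ${\bf k}(s)\ge|\tilde{\bf k}|(s)$ persists (Proposition \ref{prop:33}), applies the Euclidean Theorem \ref{thm:main1} to the projected curves, and concludes with the hinge theorem; the corner case is dispatched by a single explicit cosine formula for the projected turning angles rather than by an induction on vertices.
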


The proof is by a construction of auxiliary curves via the cones over $c(s)$ and $\tilde{c}(s)$, with one of them being a convex plane curve, and then reduce it to the above generalized  Schur's theorem, namely \ref{thm:main1}.  For the reason of completeness we present the proof of Theorem \ref{thm:main1} with  details.
Note that Theorem \ref{thm:main2} generalizes the spherical Cauchy's Arm Lemma. It would be interesting to see if it plays any role in the proof of  Pogorelov's monotype theorem. There were extensions of  A. Schur's theorem in hyperbolic spaces \cite{Epstein} and in the Minkowski plane \cite{Lopez} earlier. It is also interesting to see if the method of this paper can be  used to simplify the argument in the former work via Theorem \ref{thm:21}. In the excellent exposition \cite{Arnold} one can find many other interesting topics and constructions on spherical curves and their relevance in the study of mechanics.

The generalization in Theorem \ref{thm:main1} allows a truly spherical version of Theorem \ref{thm:main1} in Theorem \ref{thm:32} of a comparison between a convex curve in $\mathbb{S}^2$ and a curve in $\mathbb{S}^n$, for any $n\ge 2$, with its geodesic curvature $\tilde{k}(s)\le k(s)$.

\section{Proof of Theorem \ref{thm:main1}}

We prove theorem and its corollary together. After a linear isometric embedding $\iota: \mathbb{R}^2\to \mathbb{R}^{n+1}$, which shall be specified later, we may consider the tangent $T(s)$ and $\tilde{T}(s)$ as two curves in $\mathbb{S}^n$. For the situation when the tangent $T(s)$ has a jump at $s_j$, the minimizing arc jointing $T(s_j-)$ and $T(s_j+)$ is also considered to be part of the image. They together form a part of a great circle which is denoted by $\operatorname{Image}(T(s))$. Namely we may view $T(s)$ as a set valued map into $\mathbb{S}^1\subset \mathbb{R}^2$.  First we choose a point $\mathcal{N}\in \operatorname{image}(T(s))$ as a {\it normalized tangent}. Without the loss of generality we assume $s'$ and $s''$ are two smooth points.

 The convexity of $c(s)$ implies that $\operatorname{image}(T(s))$ is  part of a great arc of $\mathbb{S}^1\subset \mathbb{R}^2$. If we parametrize it with angle $\theta(s)$ from the positive axis, $\theta (s'')\ge \theta(s')$ if $s''\ge s'$.   We first find a $s_*\in [s', s'']$ and a vector $\mathcal{N}\in T(s_*)$ such that it is a positive multiple of $c(s'')-c(s')$. When $s_*$ is a smooth point $T(s_*)$ is single valued. If $s_*=s_j$ for some $j$ we pick one value in the arc spanning from $T(s_*-)$ to $T(s_*+)$. To make the argument easy, we put  the plane curve $c(s)|_{[s', s'']}$ into a $xy$-plane so that the vector $c(s'')-c(s')$ is in the positive $x$-axis direction, and $c(s')$ and $c(s'')$ are on the $x$-axis. The curve $c(s)|_{(s', s'')}$ is inside the lower half plane $y\le 0$. The curve $c(s)$ can also be expressed as $(x(s), y(s))$ with $y(s)\le 0$. Note that  $y(s')=y(s'')=0$ (and $x(s')<x(s'')$). The continuous piece-wisely smooth function $y(s)$ attains a minimum somewhere at $s_*\in (s', s'')$. Without loss of generality we may assume that $y(s_*)<0$. If $s_*\ne s_j$, namely it is a smooth point, then $y'(s_*)=0$, which  implies that $T(s_*)=(x'(s_*), 0)$. Namely $T(s_*)$ is  parallel to the $x$-axis. By the convexity, the angle between $T(s)$ and the $x$-axis starts with  $\theta(s')\ge -\pi$, and monotonically increases into $\theta(s'')\le \pi$. Hence the angle between $T(s_*)$ and the $x$-axis can only  be $-\pi, 0, \pi$.  We claim that at $s_*$ the angle between $T(s_*)$ and the $x$-axis can only be zero, namely $x'(s_*)>0$, otherwise $c(s_*)$ must be on $x$-axis, namely $y(s_*)=0$, which contradicts to $y(s_*)<0$. If $s_*=s_j$ for some $s_j$, namely the tangent has a turn at $s_*$, then $y'(s_*-)\le0$ and $y'(s_*+)\ge0$ as the consequence of that $y(s_*)$ is the  negative minimum of $y(s)|_{[s', s'']}$. Since the angle between $T(s_*)$ and the $x$-axis must be in $(-\pi, \pi)$ (otherwise $y(s_*)=0$ as the above), and the angle between $T(s_*-)$ and the $x$-axis is in $(-\pi, 0]$, while the angle between $T(s_*+)$ and the $x$-axis is in $ [0, \pi)$, we can find a value between $T(s_*-)$ and $T(s_*+)$ such that it is  in the positive $x$-axis direction. Putting these together we have found $s_*\in [s', s'']$ and a vector  $\mathcal{N}\in T(s_*)$, which is a positive multiple of $c(s'')-c(s')$.

After a possible rotation, when $s_*$ is a smooth point,  we find linear isometric embedding $\iota: \mathbb{R}^2\to \mathbb{R}^{n+1}$ such that $\iota(\mathcal{N})=\tilde{T}(s_*)$. We will  specify the linear isometric embedding/identification later when $s_*$ is a point at which the tangent of $c(s)$ has a jump. Below we shall omit $\iota$ and denote the curve $\iota(c(s)$ ($\iota(T(s))$) as $c(s)$ ($T(s)$ respectively).

Now consider the two products $P_i$ defined as
$$
P_1:=\langle c(s'') -c(s'), \mathcal{N}\rangle= \int_{s'}^{s''} \langle T(s), \mathcal{N}\rangle\, ds,  \quad  P_2:=\langle \tilde{c}(s'') -\tilde{c}(s'), \mathcal{N}\rangle= \int_{s'}^{s''} \langle \tilde{T}(s), \mathcal{N}\rangle\, ds.
$$
From the choice of $s_*$ and $\mathcal{N}$,  $\langle c(s'') -c(s'), \mathcal{N}\rangle=|c(s'')-c(s')|$. Now $P_1=|c(s'')-c(s')|$, and $P_2=\langle \tilde{c}(s'') -\tilde{c}(s'), c(s'') -c(s')\rangle/|c(s'')-c(s')|$.

The claimed estimate (\ref{eq:11})  amounts to showing that the second product is bounded from below by the first (after a linear isometric embedding). Let $j_1$ be the smallest $j$ such that $s_j\ge s'$, $j_2$ be the biggest $j$ such that $s_j\le s''$ and $j_3$ be the biggest $j$ with $ s_j\le s_*$. Observe  the  convexity of $c(s)$ implies that for the case $s_*$ is a smooth point
\begin{eqnarray*}
&\,& \alpha_{j_0}+\sum_{j_1\le j\le j_3} \alpha_j+\int_{s'}^{s_{j_1}} k(s)\, ds+\sum_{j_1\le j\le j_3-1}\int_{s_j}^{s_{j+1}} k(s)\, ds+\int_{s_{j_3}}^{s_*} k(s)\, ds=\pi, \\
&\,&  \sum_{j_3+1\le j\le j_2} \alpha_j +\alpha_{j_{4}}+\int_{s_*}^{s_{j_3+1}} k(s)\, ds+\sum_{j_3+1\le j\le j_2-1}\int_{s_j}^{s_{j+1}} k(s)\, ds+\int_{s_{j_2}}^{s''} k(s)\, ds=\pi,
\end{eqnarray*}
with $\alpha_{j_0}$ being the angle from $-\mathcal{N}$ to  $T(s')$ and $\alpha_{j_4}$ being the angle from $T(s'')$ to  $-\mathcal{N}$. This implies that the image of $T([s', s_*])$ (a piece-wisely smooth spherical curve denoted by $\Gamma_1$) and the image of $T([s_*, s''])$ (another a piece-wisely smooth spherical curve denoted as $\Gamma_2$) are two minimizing arcs of the great circle. Geometrically, $\Gamma_1$ is the piecing together of $T|_{[s',s_{j_1})}$, the connecting  great arc jointing $T(s_{j_1}-)$ to $T(s_{j_1}+)$, $T|_{(s_j, s_{j+1})}$, for $j_1\le j\le j_3-1$, and connecting  arcs joining $T(s_j-)$ to $T(s_j+)$,  for $j_1+1\le j\le j_3$, and $T|_{(s_{j_3}, s_*]}$. One may express it in terms of the angle $\beta(\eta)$ from $\mathcal{N}$ and parametrize it using $\eta\in [s', s_*+\sum_{k=j_1}^{j_3} \alpha_k]$ as follows.
Let $\theta(s)$ denote the angle between $\mathcal{N}$ and  $T(s)$  when $s$ smooth (which is in $[-\pi, 0]$ for $s\in [s', s_*]$ and in $[0, \pi]$ for $s\in [s_*, s'']$). Here an orientation is picked for $c(s)\in \mathbb{R}^2$, as specified in the introduction.   For $s_j$, $\theta(s_j-)$ denotes the angle between $\mathcal{N}$ to $T(s_j-)$. Define $\theta(s_j+)$ similarly. Then $\theta(s_j+)-\theta(s_j-)=\alpha_j$. Explicitly   $\beta(\eta)$, for  $\eta\in [s', s_*+\sum_{j_1}^{j_3} \alpha_k]$,  which takes value in $[-\pi, 0]$,  can be given by
\begin{eqnarray*}
&\,& \beta(\eta)=\theta(\eta), \mbox{ for } s'\le \eta\le s_{j_1}; \quad \beta(s_{j_1})=\theta(s_{j_1}-); \\ &\,&\beta(\eta)=\theta(s_{j_1}-)+\eta-s_{j_1}, \mbox{ for } s_{j_1}<\eta<s_{j_1}+\alpha_{j_1}; \quad \beta(s_{j_1}+\alpha_{j_1})=\theta(s_{j_1}+);\\
&\,& \beta(\eta)=\theta(\eta-\alpha_{j_1}), \mbox{ for } s_{j_1}+\alpha_{j_1}<\eta<s_{j_1+1}+\alpha_{j_1}; \quad \beta(s_{j_1+1}+\alpha_{j_1})=\theta(s_{j_1+1}-).
\end{eqnarray*}
And for $j_3-1\ge j\ge j_1+1$, $\beta(s_j+\sum_{j_1}^{j-1} \alpha_k)=\theta(s_j-);$
\begin{eqnarray*}
&\,&  \beta(\eta)=\theta(s_j-)+\eta- (s_j+\sum_{j_1}^{j-1} \alpha_k), \mbox{ for }\eta\in (s_j+\sum_{j_1}^{j-1} \alpha_k,  s_j+ \sum_{j_1}^j \alpha_k);\\
&\,& \beta(s_j+\sum_{j_1}^j \alpha_k)=\theta(s_j+); \\
&\,& \beta(\eta)=\theta(\eta-\sum_{j_1}^j \alpha_k), \mbox{ for } \eta \in (s_j +\sum_{j_1}^j \alpha_k, s_{j+1}+\sum_{j_1}^{j}\alpha _k).
\end{eqnarray*}
And finally $\beta(s_{j_3}+\sum_{j_1}^{j_3-1}\alpha_k)=\theta(s_{j_3}-);$
\begin{eqnarray*}
&\,&\beta(\eta)=\theta(s_{j_3}-)+\eta-(s_{j_3}+\sum_{j_1}^{j_3-1} \alpha_k), \mbox{ for } \eta\in (s_{j_3}+\sum_{j_1}^{j_3-1} \alpha_k, s_{j_3}+\sum_{j_1}^{j_3} \alpha_k);\\
&\,& \beta(s_{j_3}+\sum_{j_1}^{j_3} \alpha_k)=\theta(s_{j_3}+); \\
&\,& \beta(\eta)=\theta (\eta-\sum_{j_1}^{j_3} \alpha_k), \mbox{ for } \eta \in (s_{j_3}+\sum_{j_1}^{j_3} \alpha_k, s_*+\sum_{j_1}^{j_3} \alpha_k].
\end{eqnarray*}
One can check that $\Gamma_1(\eta)$ is piece-wisely smooth. Moreover the convexity of $c(s)$ implies that $\beta(\eta'')\ge \beta(\eta')$ if $\eta''\ge \eta'$. Similarly a parametrization can be given  for $\Gamma_2$ with the corresponding $\beta(\eta)$ valuing in $[0, \pi]$. The choice of $\mathcal{N}$, and that there is no `folding' for $\Gamma_i$,  imply that
\begin{equation}\label{eq:key1}
\max\{\operatorname{Length} (\Gamma_1), \operatorname{Length} (\Gamma_2)\}\le \pi.
\end{equation}

We also denote the spherical curves corresponding to $\tilde{T}$ by $\widetilde{\Gamma}_i$. For example, $\widetilde{\Gamma}_1$ is obtained by piecing together of $\tilde{T}|_{[s',s_{j_1})}$, the minimizing  great arc  jointing $\tilde{T}(s_{j_1}-)$ to $\tilde{T}(s_{j_1}+)$, $\tilde{T}|_{(s_j, s_{j+1})}$,   for $j_1\le j\le j_3-1$, and minimizing  arcs joining $\tilde{T}(s_j-)$ to $\tilde{T}(s_j+)$,  for $j_1+1\le j\le j_3$, and $\tilde{T}|_{(s_{j_3}, s_*]}$.   However since $\tilde{c}(s)$ is not necessarily a convex plane curve, $\widetilde{\Gamma}_i$ over $(s_j, s_{j+1})$, namely $\tilde{T}|_{(s_j, s_{j+1})}$ is not necessarily in a plane. 
 Noting that $\mathcal{N}=T(s_*)=\tilde{T}(s_*)$ we estimate
\begin{eqnarray*}
\pi &\ge&  d_{\mathbb{S}^n}(T(s'), \mathcal{N})=d_{\mathbb{S}^n}(T(s'), T(s_*))=\operatorname{Length} (\Gamma_1)\\&=&\sum_{j_1\le j\le j_3} \alpha_j+\int_{s'}^{s_{j_1}} k(s)\, ds +\sum_{j_1\le j\le j_3-1}\int_{s_j}^{s_{j+1}} k(s)\, ds +\int_{s_{j_3}}^{s_*} k(s)\, ds\\
&\ge&\sum_{j_1\le j\le j_3} \tilde{\alpha}_j+\int_{s'}^{s_{j_1}} \tilde{k}(s)\, ds+\sum_{j_1\le j\le j_3-1}\int_{s_j}^{s_{j+1}} \tilde{k}(s)\, ds+\int_{s_{j_3}}^{s_*} \tilde{k}(s)\, ds\\
&=& \sum_{j_1\le j\le j_3} \tilde{\alpha}_j+\int_{s'}^{s_{j_1}}|\tilde{T}'|(s)\, ds + \sum_{j_1\le j\le j_3-1}\int_{s_j}^{s_{j+1}} |\tilde{T}'|(s)\, ds+\int_{s_{j_3}}^{s''}|\tilde{T}'|(s)\, ds\\
&=&  \operatorname{Length} (\widetilde{\Gamma}_1) \ge d_{\mathbb{S}^n}(\tilde{T}(s'), \tilde{T}(s_*))=d_{\mathbb{S}^n}(\tilde{T}(s'),\mathcal{N}).
\end{eqnarray*}
The second line above follows from the definition of the curvature (for the space curve) as the derivative of the  angle $\theta(s)$ between the tangent $T(s)$ and $T(s+h)$ for a curve \cite{Pogo} (cf. page 49) and that for a convex curve $k(s)\ge 0$. The third line uses the assumption of the theorem, and the last line follows from the definition of the (spherical) distance between two points being the infimum of the length of all possible piece-wisely smooth pathes (in $\mathbb{S}^n$) connecting them.  By the monotonicity of the parametrization of $\beta(\eta)$  the same argument also implies that for any smooth $s\in [s', s_*]$
$$
\pi\ge d_{\mathbb{S}^n}(T(s), \mathcal{N})=\operatorname{Length} (\Gamma_1|_{[s, s_*]})\ge \operatorname{Length} (\widetilde{\Gamma}_1|_{[s, s_*]}) \ge d_{\mathbb{S}^n}(\tilde{T}(s), \mathcal{N}).
$$
Here $\Gamma_1|_{[s, s_*]}$ means the curve obtained by piecing together $T|_{[s, s_*]}$ as explained above. The same applies to $\widetilde{\Gamma}_1|_{[s, s_*]}$.
The above estimate implies that  \begin{eqnarray}\label{eq:21}\langle T(s), \mathcal{N}\rangle &=&\cos ( d_{\mathbb{S}^n}( T(s), T(s_*))\le \cos ( d_{\mathbb{S}^n}( \tilde{T}(s), \tilde{T}(s_*))\\
&=&\cos ( d_{\mathbb{S}^n}( \tilde{T}(s), \mathcal{N}))=\langle \tilde{T}(s),\mathcal{N}\rangle.\nonumber\end{eqnarray}
Rewriting the above estimate we have that
$
\langle \tilde{T}(s)-T(s), \mathcal{N}\rangle \ge 0
$ for $s\in [s', s_*]$, which implies (\ref{eq:10}). A similar argument proves that the  inequality (\ref{eq:21})  holds also for $s\in [s_*, s'']$. Putting them together we have (\ref{eq:10}).

Now we explain how to handle the case when $s_*=s_{j_3}$ is not a smooth point. By the previous construction $\mathcal{N}\in T(s_{j_3})$. Recall that the image of $T(s_{j_3})$ is  the minimizing arc joining $T(s_{j_3}-)$ to $T(s_{j_3}+)$. Assume that $\gamma(t): [0, \alpha_{j_3}]\to \mathbb{S}^1$ is this arc parametrized by the arc-length $t$ with $\gamma(0)=T(s_{j_3}-)$ and $\gamma(\alpha_{j_3})=T(s_{j_3}+)$.  Assume that $\gamma(t_1)=\mathcal{N}$ for some $t_1\in [0, \alpha_{j_3}]$. There exists a similar minimizing geodesic $\tilde{\gamma}(t):[0, \tilde{\alpha}_{j_3}]\to \mathbb{S}^n$ corresponding to $\tilde{T}(s_{j_3})$. If $t_1 \le\tilde{\alpha}_{j_3}$, we choose a linear isometric embedding which identifies $\tilde{\gamma}(t_1)$ with $\mathcal{N}$. If $\alpha_{j_3}-t_1\le \tilde{\alpha}_{j_3}$ we identify $\mathcal{N}$ with $\tilde{\gamma}(\tilde{\alpha}_{j_3}-\alpha_{j_3}+t_1)$. If none of the previous two cases holds, namely $t_1 >\tilde{\alpha}_{j_3}$ and $\alpha_{j_3}-t_1> \tilde{\alpha}_{j_3}$,  we  identify $\mathcal{N}$ with any arbitrary chosen $\tilde{\gamma}(t)$ with $t\in [0, \tilde{\alpha}_{j_3}]$. For example one choice of the linear isometric embedding is to identify $\mathcal{N}$ with  $\tilde{\gamma}(0)=\tilde{T}(s_{j_3}-)$.  The above identification ensures  the comparison since
\begin{eqnarray*}
 d_{\mathbb{S}^n}(T(s'), \mathcal{N})&=& \sum_{j_1}^{j_3-1} \alpha_j+\int_{s'}^{s_{j_1}} k(s)\, ds+\sum_{j_1}^{ j_3-1}\int_{s_j}^{s_{j+1}} k(s)\, ds+ t_1\\
 &\ge&  \sum_{j_1}^{ j_3-1} \tilde{\alpha}_j+\int_{s'}^{s_{j_1}} \tilde{k}(s)\, ds+\sum_{j_1}^{ j_3-1}\int_{s_j}^{s_{j+1}} \tilde{k}(s)\, ds+d_{\mathbb{S}^n}(\tilde{T}(s_{j_3}-), \mathcal{N})\\
 & \ge& d_{\mathbb{S}^n}(\tilde{T}(s'), \mathcal{N})
\end{eqnarray*}
by $t_1\ge d_{\mathbb{S}^n}(\tilde{T}(s_{j_3}-), \mathcal{N})$, which equals to  either $t_1$,  $\tilde{\alpha}_{j_3}-\alpha_{j_3}+t_1$ or $0$ according to the three possible identifications above. (If in the last case of the above three situations we choose a linear isometric embedding to identify $\mathcal{N}$ with $\tilde{\gamma}(t_2)$ the arc jointing $\tilde{T}(s_{j_3}-)$ with $\tilde{T}(s_{j_3}+)$,   the estimate remains true due to that $t_1 >\tilde{\alpha}_{j_3}\ge t_2$ since this inequality holds when the first two situations do not arise.) The comparisons for $s\in [s', s_*]$ and $s\in [s_*, s'']$ are the same.

Now we compare the two products $P_i$ by writing
$$
P_1 =\int_{s'}^{s''} \langle T(s), \mathcal{N}\rangle \, ds =\left(\int_{s'}^{s_*}+\int_{s_*}^{s''}\right)\cos\left(d_{\mathbb{S}^n}( T(s), T(s_*)\right)\, ds.
$$
We express $P_2$ accordingly. The above estimate (\ref{eq:21}) implies that
\begin{equation}\label{eq:22}
\int_{\eta}^{s_*}\cos(d_{\mathbb{S}^n}( T(s), T(s_*))\, ds\le \int_{\eta}^{s_*}\cos(d_{\mathbb{S}^n}(\tilde{T}(s), \tilde{T}(s_*))\, ds, \quad \forall \eta \in [s', s_*]. 
\end{equation}
Similarly, we have
\begin{equation}\label{eq:23}
\int_{s_*}^{\eta}\cos(d_{\mathbb{S}^n}( T(s), T(s_*))\, ds\le \int_{s_*}^{\eta}\cos(d_{\mathbb{S}^n}(\tilde{T}(s), \tilde{T}(s_*))\, ds, \quad \forall \eta \in [s_*, s''].
\end{equation}
From (\ref{eq:22}) and (\ref{eq:23}), applied to $\eta=s'$ and $\eta=s''$ we have that $P_1\le P_2$, namely the  desired claim (\ref{eq:11}). The equality case can be seen by tracing  estimates. Applying them to $\eta=s'_*$ and $s''_*$ respectively we have (\ref{eq:101}). This completes the proof of corollary.

From the proof we have the following more general monotonicity since the proof for the monotonicity works as along as  $\mathcal{N}$ satisfies  (\ref{eq:key1}), while (\ref{eq:1-1}) implies that one can always choose a $s_*\in [0, L]$ and $\mathcal{N}\in T(s_*)$ independent of $s'$ and $s''$.

\begin{proposition}  Let $c: [0, L]\to \mathbb{R}^2$ be an embedded convex plane curve with curvature $k(s)\ge 0$. Let $\tilde{c}: [0, L]\to \mathbb{R}^{n+1}$ ($n\ge 1$) be a curve such that $\tilde{k}(s)\le k(s)$. Then there exists $s_*\in [0, L]$, $\mathcal{N}\in T(s_*)$ and a linear isometric embedding $\iota:\mathbb{R}^2\to \mathbb{R}^{n+1}$ with $\iota(0)=0$ and $\iota(\mathcal{N})\in \tilde{T}(s_*)$,  such that for any smooth $s$
 \begin{equation}\label{eq:24}
I_1'(s)= \langle \tilde{T}(s)-\iota(T(s)), \iota(\mathcal{N})\rangle \ge 0,\quad  \forall\, s \in [0, L], \mbox{ where } I_1(s)=\langle \tilde{c}(s)-\iota(c(s)), \iota(\mathcal{N})\rangle.
 \end{equation}
\end{proposition}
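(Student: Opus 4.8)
The plan is to rerun the argument of the proof of Theorem \ref{thm:main1}, but to replace the $(s',s'')$-dependent splitting point by a single point $s_*$ chosen once and for all. The decisive observation, already recorded at the end of that proof, is that the pointwise inequality $\langle \tilde{T}(s)-\iota(T(s)), \mathcal{N}\rangle\ge 0$ holds for every $s$ and every choice of $\mathcal{N}=T(s_*)$ for which the two sub-arcs of $\operatorname{Image}(T)$ on either side of $T(s_*)$ each have length at most $\pi$. Indeed, the splitting point entered the estimate only through (\ref{eq:key1}), which guaranteed that the relevant arc of $\Gamma_i$ is minimizing, so that its length equals the spherical distance $d_{\mathbb{S}^n}(T(s), \mathcal{N})$; once this holds, the chain of inequalities culminating in (\ref{eq:21}) goes through verbatim.

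First I would produce the uniform splitting point. Let $\Theta(s)=\int_0^s k(\sigma)\, d\sigma+\sum_{s_j\le s}\alpha_j$ denote the total turning of $T$ accumulated on $[0,s]$, i.e. the arc-length of $\operatorname{Image}(T|_{[0,s]})$ along the great circle, the jump arcs being counted with their full angle $\alpha_j$. Since $k\ge 0$ and $\alpha_j>0$, the function $\Theta$ is non-decreasing with $\Theta(0)=0$, and by (\ref{eq:1-1}) we have $\Theta(L)\le 2\pi$. I then take the midpoint of the total turning: choose $s_*\in[0,L]$ (an interior point of a jump arc, with $\mathcal{N}$ the corresponding point of the minimizing arc joining $T(s_j-)$ and $T(s_j+)$, if the midpoint falls there) so that the turning accumulated on each of $[0,s_*]$ and $[s_*,L]$ equals $\tfrac12\Theta(L)\le\pi$. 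This is the full-interval analogue of (\ref{eq:key1}), namely $\max\{\operatorname{Length}(T|_{[0,s_*]}), \operatorname{Length}(T|_{[s_*,L]})\}\le\pi$, and crucially $s_*$ is independent of $s'$ and $s''$. Finally I fix the linear isometric inclusion $\iota:\mathbb{R}^2\to\mathbb{R}^{n+1}$, $\iota(0)=0$, by composing with a rotation so that $\iota(T(s_*))=\tilde{T}(s_*)=:\mathcal{N}$.

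With this $s_*$ and $\iota$ I would then verify $I_1'(s)\ge 0$ exactly as before, splitting into the cases $s\le s_*$ and $s\ge s_*$. For $s\le s_*$, the bound on $\operatorname{Length}(T|_{[0,s_*]})$ makes $T|_{[s,s_*]}$ a minimizing arc, so $d_{\mathbb{S}^n}(T(s), \mathcal{N})$ equals its length; the comparisons $k\ge\tilde{k}$ and $\alpha_j\ge\tilde{\alpha}_j$ bound this length below by the total turning of $\tilde{T}$ on $[s,s_*]$, which by the triangle inequality on $\mathbb{S}^n$ dominates $d_{\mathbb{S}^n}(\tilde{T}(s), \mathcal{N})$; both distances lie in $[0,\pi]$, so applying $\cos$ reverses the inequality and gives $\langle \iota(T(s)), \mathcal{N}\rangle\le\langle\tilde{T}(s), \mathcal{N}\rangle$, i.e. $I_1'(s)=\langle\tilde{T}(s)-\iota(T(s)), \mathcal{N}\rangle\ge 0$. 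The case $s\ge s_*$ is symmetric. Since $\mathcal{N}=\tilde{T}(s_*)$ and $I_1(s)=\langle\tilde{c}(s)-\iota(c(s)), \tilde{T}(s_*)\rangle$, this is precisely (\ref{eq:24}).

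The step requiring the most care is the bookkeeping at the jump points. One must check that the arc-length-equals-distance identity survives when $s_*$ lies on a jump arc (so that $\mathcal{N}$ is an interior point of the minimizing arc joining $T(s_j-)$ and $T(s_j+)$), and that the total turning of $\tilde{T}$ on the relevant subinterval correctly incorporates each contribution $\tilde{\alpha}_j$, so that the comparisons $k\ge\tilde{k}$ and $\alpha_j\ge\tilde{\alpha}_j$ may be applied term by term. Since $c$ and $\tilde{c}$ are themselves continuous (only their tangents jump), $I_1$ is continuous and piecewise $C^1$, and once $I_1'\ge 0$ is established on each smooth subinterval the monotonicity (\ref{eq:24}) follows. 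All of this is handled precisely as in the proof of Theorem \ref{thm:main1}, so no genuinely new difficulty arises beyond this bookkeeping.
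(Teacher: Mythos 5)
Your proposal is correct and follows essentially the same route as the paper: the paper derives this Proposition directly from the proof of Theorem \ref{thm:main1}, observing there that the pointwise estimate works for any $\mathcal{N}=T(s_*)$ satisfying the arc-length bound (\ref{eq:key1}), and that (\ref{eq:1-1}) allows a choice of $s_*$ independent of $s'$ and $s''$. Your explicit ``midpoint of total turning'' construction of $s_*$, and the careful handling of jump arcs, merely flesh out the detail the paper leaves implicit.
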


The proof can be easily adopted to show a comparison between a time-like curves in a Minkowski plane $L^2_1$  and another time-like curve in the three dimensional Minkowski space $L^3_1$ with signature $(+, -, -)$. In fact in terms of the monotonicity one may choose $s_*$ freely.

Following the convention of the physics  a vector $u$ is called time-like if $\langle u, u\rangle >0$. For a time-like curve $c(s)$, parametrized by the arc-length, $|T(s)|^2=|c'(s)|^2=1$. Hence $T(s)$ can be viewed as a point in the hyperbolic line $H^1$ (hyperbolic plane $H^2$) defined as $x_1^2-x_2^2=1$ ($x_1^2-x_2^2-x_3^2=1$, respectively). It can be checked easily that $-1$ multiple of the restricted metric on the line/surface is the standard hyperbolic metric. Using the arc-length $s$, $T(s)$ can be expressed as $(\cosh \theta(s), \sinh \theta(s))$. Hence $\theta(s)$ is the analogous angle function of $T(s)$ in $H^1$.  A simple computation shows that, for a convex curve, the angle difference $\varphi(s_2, s_1)=\theta(s_2)-\theta(s_1)$ equals to the hyperbolic distance between $T(s_1)$ and $T(s_2)$. In fact $T'(s)=(\sinh \theta(s), \cosh \theta(s))\theta'(s)$. In view that $d_{H^1}(T(s_2), T(s_1))=\int_{s_1}^{s_2}\sqrt{-\langle T'(s), T'(s)\rangle}=\int_{s_1}^{s_2} \theta'(s)\, ds=\theta(s_2)-\theta(s_1)$. Hence we used that $\theta'(s)=k(s)\ge 0$.
 This key observation is the hyperbolic analogue of the fact used in the above proof that the angle difference is the distance between two tangents in $\mathbb{S}^1$, which allows  a similar consideration as the above to yield the following result.

\begin{theorem}\label{thm:21}
 Let $c(s):[0, L]$ be a time-like convex curve in $L^2_1$ parametrized by the arc-length,  and let  $\tilde{c}(s):[0, L]$ be a similarly parametrized regular time-like curve in $L^3_1$. Assume that $k(s)\ge |\tilde{k}|(s)$. Then for any $s_*\in [0, L]$ and a linear isometric embedding of  $\iota :L^2_1\to  L^3_1$, which identifies $T(s_*)$ with $\tilde{T}(s_*)$,  we have that
\begin{equation}\label{eq:2add}
I_2'(s)= \langle \iota(T(s))-\tilde{T}(s), \tilde{T}(s_*)\rangle \ge 0, \mbox{ where } I_2(s)=\langle \iota (c(s))-\tilde{c}(s), \tilde{T}(s_*)\rangle.
\end{equation}
In particular, $|c(L)-c(0)|\ge |\tilde{c}(L)-\tilde{c}(0)|$. The equality holds if and only if $\iota(c(s))=\tilde{c}(s)$.
\end{theorem}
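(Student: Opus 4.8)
The plan is to essentially transplant the proof of Theorem~\ref{thm:main1}, replacing the sphere $\mathbb{S}^n$ by the future branch of the unit hyperboloid $\{\langle x,x\rangle=1\}$ equipped with (minus) its induced metric, i.e. the hyperbolic line in $L^2_1$ and the hyperbolic plane in $L^3_1$. Since $c(s)$ and $\tilde{c}(s)$ are time-like and arc-length parametrized, $\langle T(s),T(s)\rangle=\langle \tilde{T}(s),\tilde{T}(s)\rangle=1$, so $T(s)$ and $\tilde{T}(s)$ are points on these hyperboloids. As recorded in the excerpt, $\langle T(s_1),T(s_2)\rangle=\cosh\varphi$ with $\varphi=d_{\mathbb{H}}(T(s_1),T(s_2))$ the hyperbolic distance, and the geodesic curvature $k(s)$ (resp. $|\tilde{k}|(s)$) is exactly the speed of the tangent indicatrix in this hyperbolic metric. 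Hence the hyperbolic length of the indicatrix of $c$ over $[a,b]$ is $\int_a^b k\,ds$, and that of $\tilde{c}$ is $\int_a^b|\tilde{k}|\,ds$.

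Next I would establish the pointwise monotonicity $I_2'(s)\ge 0$ for an arbitrary $s_*$. Fixing the isometric inclusion $\iota$ that identifies $T(s_*)$ with $\tilde{T}(s_*)$ gives $\langle \iota(T(s)),\tilde{T}(s_*)\rangle=\langle T(s),T(s_*)\rangle=\cosh d_{\mathbb{H}}(T(s),T(s_*))$ and likewise $\langle \tilde{T}(s),\tilde{T}(s_*)\rangle=\cosh d_{\mathbb{H}}(\tilde{T}(s),\tilde{T}(s_*))$. Because $c$ is a plane curve, its indicatrix lies on a single hyperbolic geodesic and, since convexity gives $k\ge 0$, is traversed monotonically, so its length equals the distance, $d_{\mathbb{H}}(T(s),T(s_*))=\left|\int_s^{s_*}k\,du\right|$. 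For the space curve the distance is at most the length of the connecting indicatrix, whence
\[
d_{\mathbb{H}}(\tilde{T}(s),\tilde{T}(s_*))\le\left|\int_s^{s_*}|\tilde{k}|\,du\right|\le\left|\int_s^{s_*}k\,du\right|=d_{\mathbb{H}}(T(s),T(s_*)).
\]
As $\cosh$ is increasing on $[0,\infty)$ this yields $\langle \iota(T(s)),\tilde{T}(s_*)\rangle\ge\langle \tilde{T}(s),\tilde{T}(s_*)\rangle$, i.e. $I_2'(s)\ge 0$ (differentiating $I_2$ and using linearity of $\iota$ confirms $I_2'$ is this quantity). The sign is opposite to \eqref{eq:10} precisely because $\cosh$ increases while $\cos$ decreases, which is the structural reason $\iota$ enters \eqref{eq:2add} with reversed sign. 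In contrast with the spherical Theorem~\ref{thm:main2}, no analogue of the length bound \eqref{eq:key1} is needed: the hyperbolic line is unbounded and the indicatrix of $c$ is always minimizing, so the argument runs for \emph{every} $s_*$.

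For the chord comparison I would exploit this freedom and pick $s_*$ conveniently. Writing $c(L)-c(0)=\int_0^L T(s)\,ds$ as a positive integral of future time-like unit vectors, convexity of the future time cone forces $c(L)-c(0)$ to be future time-like; the mean-value argument from the start of Section~2, carried out with the monotone hyperbolic parameter $\theta$ (so that the relevant quantity is a weighted average of $\tanh\theta$), produces a unique $s_*$ with $T(s_*)=\lambda\,(c(L)-c(0))$, $\lambda>0$, whence $\langle c(L)-c(0),T(s_*)\rangle=|c(L)-c(0)|$. Integrating $I_2'\ge 0$ over $[0,L]$ gives $\langle \iota(c(L)-c(0)),\tilde{T}(s_*)\rangle\ge\langle \tilde{c}(L)-\tilde{c}(0),\tilde{T}(s_*)\rangle$, and the left side equals $\langle c(L)-c(0),T(s_*)\rangle=|c(L)-c(0)|$. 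Since $\tilde{c}(L)-\tilde{c}(0)=\int_0^L\tilde{T}(s)\,ds$ and $\tilde{T}(s_*)$ are both future time-like, the Lorentzian reverse Cauchy--Schwarz inequality gives $\langle \tilde{c}(L)-\tilde{c}(0),\tilde{T}(s_*)\rangle\ge|\tilde{c}(L)-\tilde{c}(0)|\,|\tilde{T}(s_*)|=|\tilde{c}(L)-\tilde{c}(0)|$, and chaining the two inequalities delivers $|c(L)-c(0)|\ge|\tilde{c}(L)-\tilde{c}(0)|$.

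I expect the main obstacle to be the causal bookkeeping rather than any hard estimate: one must check throughout that $T(s_*)$, $\tilde{T}(s_*)$, and both chords lie in the same future time cone, so that (i) the interpretation $\langle\cdot,\cdot\rangle=\cosh(\cdot)$ holds on the correct (increasing) branch, and (ii) the reverse Cauchy--Schwarz inequality applies with the stated orientation. This orientation is exactly what turns the Euclidean conclusion $|c(s'')-c(s')|\le|\tilde{c}(s'')-\tilde{c}(s')|$ into the reversed Minkowski conclusion. The remaining ingredients—the mean-value selection of $s_*$ and the identification of $k$ with the hyperbolic speed of the indicatrix—are direct analogues of steps already in Section~2. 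If one allows finitely many tangent jumps with $\alpha_j\ge\tilde{\alpha}_j$, the jump angles are absorbed into the indicatrix length just as in \eqref{eq:1-1}, leaving the argument unchanged.
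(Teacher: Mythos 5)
Your proposal is correct and follows essentially the same route as the paper: viewing the tangent indicatrices as curves in the hyperboloid model of the hyperbolic line/plane, obtaining $I_2'\ge 0$ from the length-versus-distance comparison together with the monotonicity of $\cosh$ (which reverses the sign relative to the Euclidean $\cos$ case), and then choosing $T(s_*)$ proportional to the chord $c(L)-c(0)$ and applying the reversed Cauchy--Schwarz inequality for time-like vectors. Your observations that no analogue of the bound \eqref{eq:key1} is needed (so $s_*$ is free) and that the causal/orientation bookkeeping is the only delicate point match the paper's own remarks.
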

The last statement above generalizes the result of \cite{Lopez} by allowing the second curve $\tilde{c}(s)$ to be  a space curve in $L^3_1$. To derive this, by the first part of the argument in the proof of \ref{thm:main1} and Lemma 3 of \cite{Lopez}, we can find $s_*$ so that $T(s_*)$ (or a vector in $T(s_*)$ if $s_*$ is not a smooth point) is a positive multiple of $c(L)-c(0)$.  Now integrate (\ref{eq:2add}) with $s_*$  so chosen and then apply  the reserved Cauchy-Schwarz inequality (which holds for two time-like vectors in $L^3_1$). Note also that for the curves in two Minkowski planes, the result for space-like curves is the same as that for the time-like curves.

\section{Proof of Theorem \ref{thm:main2}}

We start with some basics on spherical (smooth) curves. Let $c(s)$ be a curve in $\mathbb{S}^2$ parametrized by the arc-length. Let $T(s)$ be its tangent, which is orthogonal to $c(s)$. Let $V(s)=c(s)\times T(s)$ be the cross product of $c(s)$ and $T(s)$ in $\mathbb{R}^3$, which is a normal of $c(s)$ in $T_{c(s)}\mathbb{S}^2$. The triple $\{c(s), T(s), V(s)\}$ forms an orthonormal moving frame (of $\mathbb{R}^3$) along $c(s)$. Since the geodesic curvature of a curve in the sphere (in a surface) is the changing rate of the tangential great circles (tangential geodesics in general,  by (8-3) of page 157 of \cite{Struik}), and that  $V(s)$ provides a natural parametrization of the tangential great circles, the derivative of $V(s)$ yields the geodesic curvature of $c(s)$. This can also be formulated  in terms of the following result.

\begin{proposition} \label{prop:31}
Let $k(s)$ be the geodesic curvature of $c(s)$ (with respect to $\mathbb{S}^2$). Then the following holds for $\{c(s), T(s), V(s)\}$.
\begin{eqnarray}
c'(s)&=&T(s),\label{eq:31}\\
T'(s)&=& k(s) V(s)-c(s), \label{eq:32}\\
V'(s)&=&-k(s)T(s). \label{eq:33}
\end{eqnarray}
\end{proposition}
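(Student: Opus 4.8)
The plan is to derive the Frenet-type equations (\ref{eq:31})--(\ref{eq:33}) by differentiating the orthonormal frame $\{c(s), T(s), V(s)\}$ and expressing each derivative back in this basis, using the orthonormality constraints to pin down the coefficients. Since $\{c, T, V\}$ is an orthonormal frame of $\mathbb{R}^3$ along the curve, each of $c'$, $T'$, $V'$ can be written as a linear combination of $c$, $T$, $V$, and the coefficient matrix is antisymmetric because the frame stays orthonormal (differentiating $\langle e_i, e_j\rangle = \delta_{ij}$ gives $\langle e_i', e_j\rangle + \langle e_i, e_j'\rangle = 0$). So the task reduces to identifying the three independent entries of this antisymmetric matrix.

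First I would note that (\ref{eq:31}) is just the definition $c'(s) = T(s)$ of the arc-length tangent. Next, since $|c(s)|^2 = 1$ on the sphere, differentiating gives $\langle c, c'\rangle = \langle c, T\rangle = 0$, and since $|T|^2 = 1$ we get $\langle T, T'\rangle = 0$; thus $T'$ has no $T$-component and lies in $\operatorname{span}\{c, V\}$. The $c$-component is $\langle T', c\rangle = -\langle T, c'\rangle = -\langle T, T\rangle = -1$, which produces the $-c(s)$ term in (\ref{eq:32}). The remaining $V$-component I would simply \emph{define} to be the geodesic curvature: $k(s) := \langle T'(s), V(s)\rangle$. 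This matches the characterization already recalled in the text that the geodesic curvature measures the rotation rate of the tangential great circles parametrized by $V(s)$; equivalently $k$ is the normal (within $T_{c(s)}\mathbb{S}^2$) component of the acceleration, i.e. the sphere-intrinsic curvature.

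Finally, for (\ref{eq:33}) I would use antisymmetry of the coefficient matrix rather than recomputing from scratch. We have $\langle V', c\rangle = -\langle V, c'\rangle = -\langle V, T\rangle = 0$, so $V'$ has no $c$-component; $\langle V', V\rangle = 0$ from $|V|^2 = 1$, so no $V$-component; and $\langle V', T\rangle = -\langle V, T'\rangle = -\langle V, k V - c\rangle = -k(s)$, giving $V'(s) = -k(s) T(s)$. (One could alternatively obtain this by differentiating $V = c \times T$ directly and substituting (\ref{eq:31})--(\ref{eq:32}): $V' = c' \times T + c \times T' = T \times T + c \times (kV - c) = k\,(c \times V) = -k\,T$, using $c \times V = -T$ for the right-handed frame.)

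The step requiring the most care is the identification of the $V$-component of $T'$ with the geodesic curvature $k(s)$, since this is where the geometric content sits rather than pure linear algebra. I would want to make sure the sign convention and orientation of $V = c \times T$ are consistent with the definition of $k$ used elsewhere in the paper, so that the cross-product computation $c \times V = -T$ comes out with the right sign and reproduces (\ref{eq:33}) exactly. Everything else is a routine consequence of orthonormality and the antisymmetry of the connection matrix, so I expect the proof to be short once this convention is fixed.
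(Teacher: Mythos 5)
Your proposal is correct and follows essentially the same route as the paper: take (\ref{eq:31}) and $k(s)=\langle T'(s),V(s)\rangle$ as definitions, get the $-c(s)$ term in (\ref{eq:32}) from differentiating the orthonormality of the frame, and obtain (\ref{eq:33}) from the relation $V=c\times T$ (the paper's proof of (\ref{eq:33}) is exactly your parenthetical cross-product computation $V'=c\times T'=k\,(c\times V)=-k\,T$, while your primary antisymmetry argument is an equivalent repackaging). Your sign check $c\times V=-T$ matches the paper's convention, so no gap.
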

\begin{proof} The first equation is definition. Also by definition $k(s)=\langle T'(s), V(s)\rangle$. Hence from
$$
0=\frac{d^2}{ds^2}\left( |c|^2(s)\right)=2\langle T(s), T(s)\rangle+2\langle c(s), T'(s)\rangle =2+2\langle c(s), T'(s)\rangle
$$
we deduce the second equation.
Now by the second equation
$$
V'(s)=T(s)\times T(s)+c(s)\times T'(s)=k(s)\, c(s)\times V(s)=-k(s) T(s).
$$
This prove the third one, hence completes the proof of the proposition.
\end{proof}
The local convexity of $c(s)$ is equivalent to $k(s)\ge0$.
The construction starts with  the cone $\mathcal{C}(c(s))$ over the spherical curve $c(s)$ centered at the origin,  and then obtain a plane curve $\mathcal{P}_c(s)$ by taking the intersection of $\mathcal{C}(c(s))$ with  a plane $P$ not passing the origin. There are many choices of such a plane. We simply choose one.  This curve can be expressed as $R(s) c(s)$ with $R(s)$ being the distance of  $\mathcal{P}_c(s)$ to the origin.  We need  the following formula for the curvature of the space curve in $\mathbb{R}^3$ applied to  $\mathcal{P}_c(s)$.

\begin{proposition}\label{prop:32} If $c(s)$ is a convex curve in $\mathbb{S}^2$, $\mathcal{P}_c(s)$ is a convex curve in $P$.  The curvature ${\bf k}(s)$ of  $\mathcal{P}_c(s)$ (as a space curve of $\mathbb{R}^3$) is given by
\begin{equation}\label{eq:34}
{\bf k}^2(s)=\frac{|\mathcal{P}'_c(s)\times \mathcal{P}''_c(s)|^2}{|\mathcal{P}'_c(s)|^{6}}.
\end{equation}
\end{proposition}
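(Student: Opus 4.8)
The plan is to treat the two assertions separately, with the convexity claim carrying the real content and the curvature identity being the standard space-curve formula. I would begin by recording that $\mathcal{P}_c(s)=R(s)c(s)$ lies in the affine plane $P=\{x:\langle x,\nu\rangle=d\}$, where $\nu$ is a unit normal and $d>0$, so that $R(s)\langle c(s),\nu\rangle=d$ and hence $\langle c(s),\nu\rangle=d/R(s)>0$. Differentiating $\mathcal{P}_c=Rc$ and using the frame equations \eqref{eq:31}--\eqref{eq:33}, I would compute
\begin{align*}
\mathcal{P}'_c&=R'c+RT,\\
\mathcal{P}''_c&=(R''-R)c+2R'T+RkV.
\end{align*}
In particular $|\mathcal{P}'_c|^2=(R')^2+R^2\ge R^2>0$, so $\mathcal{P}_c$ is a regular space curve, which is what makes the curvature formula applicable.

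Next I would establish convexity by computing the signed curvature of $\mathcal{P}_c$ inside $P$. Since $\langle\mathcal{P}_c,\nu\rangle\equiv d$, both $\mathcal{P}'_c$ and $\mathcal{P}''_c$ are orthogonal to $\nu$, so $\mathcal{P}'_c\times\mathcal{P}''_c$ is parallel to $\nu$. Expanding the cross product in the right-handed orthonormal frame $\{c,T,V\}$ (with $c\times T=V$, $T\times V=c$, $V\times c=T$) gives
$$
\mathcal{P}'_c\times\mathcal{P}''_c=R^2k\,c-RR'k\,T+\bigl(2(R')^2-RR''+R^2\bigr)V.
$$
Pairing this with $\nu$ and using $\langle c,\nu\rangle=d/R$ isolates the proportionality constant: writing $\mathcal{P}'_c\times\mathcal{P}''_c=\lambda\nu$, the $c$-component forces $\lambda\,(d/R)=R^2k$, that is $\lambda=R^3k/d$. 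Consequently the signed curvature $(\mathcal{P}'_c\times\mathcal{P}''_c)\cdot\nu/|\mathcal{P}'_c|^3=R^3k/(d\,|\mathcal{P}'_c|^3)$ has the same sign as $k(s)$, so $k(s)\ge 0$ forces $\mathcal{P}_c$ to turn consistently in one direction, i.e. to be convex.

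Finally, the curvature identity \eqref{eq:34} is the standard formula ${\bf k}=|\gamma'\times\gamma''|/|\gamma'|^3$ for the curvature of a regular, not necessarily unit-speed, space curve $\gamma$ in $\mathbb{R}^3$, applied to $\gamma=\mathcal{P}_c$; its validity is guaranteed by the regularity checked above. I expect the main obstacle to be the bookkeeping in the convexity step—specifically, fixing the orientation of $P$ and the sign of $d$ so that the scalar $\lambda$ emerges as $R^3k/d$ rather than its negative, and thereby genuinely matching the sign of the geodesic curvature $k$. Once these conventions are pinned down, the identity $\lambda=R^3k/d$ renders the inheritance of convexity transparent and even yields the explicit value ${\bf k}=R^3k/\bigl(d\,((R')^2+R^2)^{3/2}\bigr)$ as a byproduct, which should be convenient for the subsequent comparison argument.
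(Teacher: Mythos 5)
Your computational half is sound, and in fact your frame expansion of $\mathcal{P}_c'\times\mathcal{P}_c''$ is the correct one: the coefficient of $V$ is $2(R')^2-RR''+R^2$ (the corresponding display in the paper's proof of Proposition \ref{prop:33} shows $-2RR''$, an inconsequential slip there, as the same term appears on both sides of the comparison). The regularity remark $|\mathcal{P}_c'|^2=(R')^2+R^2>0$ is worth making explicit, and for the curvature identity you do essentially what the paper does, namely invoke the standard formula for a regular space curve. One caveat: as printed, \eqref{eq:34} is not literally that standard formula --- squaring ${\bf k}=|\gamma'\times\gamma''|/|\gamma'|^3$ puts $|\mathcal{P}_c'|^6$ in the denominator, not $|\mathcal{P}_c'|^3$ --- so what you (correctly) verify is the formula with exponent $6$; the discrepancy is a typo in the statement, harmless later because $\mathcal{P}_c$ and $\tilde{\mathcal{P}}_{\tilde{c}}$ have the same speed.

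The genuine gap is in the convexity claim. What your signed-curvature computation shows is that the signed curvature equals $R^3k/\bigl(d\,((R')^2+R^2)^{3/2}\bigr)\ge 0$, i.e. \emph{local} convexity. For an open embedded arc this does not imply convexity in the sense the paper needs, namely that $\mathcal{P}_c$ lies in a closed half-plane bounded by each of its tangent lines: an embedded arc of a spiral has strictly positive curvature and fails this, and one can even keep the total turning below $2\pi$ (take a circular arc of turning $2\pi-\epsilon$ extended by short tangent segments at both ends). The global property is what matters downstream: in the proof of Theorem \ref{thm:main2}, $\mathcal{P}_c$ must satisfy the hypotheses of Theorem \ref{thm:main1}, whose proof rests on the turning bound \eqref{eq:1-1}, and that bound follows from convexity plus embeddedness, not from ${\bf k}\ge 0$ alone. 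Note also that your argument uses only $k\ge 0$ and never the global (hemisphere) convexity of the spherical curve $c$, so it cannot possibly yield a global conclusion. The paper's proof supplies exactly the geometric transfer you are missing: convexity of $c$ in $\mathbb{S}^2$ means $c$ lies in a closed half-sphere bounded by each tangent great circle; the plane of that great circle meets $P$ in the tangent line of $\mathcal{P}_c$ at the corresponding point, and the half-sphere condition passes to the corresponding half-plane in $P$, which therefore contains $\mathcal{P}_c$. Your computation remains a useful quantitative supplement (it gives ${\bf k}$ explicitly), but the convexity step should be replaced by, or grounded in, this tangent-line argument.
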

\begin{proof} From the geometric definition of the convexity we know that $c(s)$ lies in a signed semi-sphere cut out by any tangent great circle obtained by a plane $P_0$  passing the origin. Then it is clear that $\mathcal{P}_c(s)$ lies on the corresponding half plane cut out by the corresponding tangent line of $\mathcal{P}_c(s)$  in $P$ which is the intersection of $P_0$ and $P$. This proves the convexity of $\mathcal{P}_c(s)$. The formula for the curvature of a space curve is well known and computational. See for example page 51 of \cite{Pogo}. Of course the formula does apply to the case that the curve happens to be a plane curve.
\end{proof}

Now let $\tau$ be the arc-length parameter of $\mathcal{P}_c(s)$. Direct calculation shows that
\begin{equation}\label{eq:35}
\tau(s)=\int_0^s \sqrt{(R'(s))^2+R^2(s)}\, ds.
\end{equation}

Now we construct a  space curve $\tilde{\mathcal{P}}_{\tilde{c}}(s)$ corresponding to $\tilde{c}(s)$ by defining it as $R(s) \tilde{c}(s)$. In general, this is not a plane curve. The key observation is that the arc-length parameter for $\tilde{\mathcal{P}}_{\tilde{c}}(s)$ is the same as that of $\mathcal{P}_c(s)$, namely it is given by (\ref{eq:35}) as well,  since $|\tilde{c}|(s)=1=|c(s)|$ and $|\tilde{c}'|(s)=1=|c'(s)|$.
Moreover its curvature $\tilde{{\bf k}}(s)$ (as a curve in $\mathbb{R}^3$) can be expressed similarly as
\begin{equation}
\tilde{{\bf k}}^2(s)= \frac{|\tilde{\mathcal{P}}'_{\tilde{c}}(s)\times \tilde{\mathcal{P}}''_{\tilde{c}}(s)|^2}{|\tilde{\mathcal{P}}'_{\tilde{c}}(s)|^{6}}.
\end{equation}
Namely the second part of Proposition \ref{prop:32} applies to $\tilde{\mathcal{P}}_{\tilde{c}}(s)$ as well since it holds for any space curve in $\mathbb{R}^3$.
The key step is the following comparison.

\begin{proposition}\label{prop:33}  Under the assumption that the geodesic curvature $k(s)$ of $c(s)$ and the geodesic curvature $\tilde{k}(s)$ of $\tilde{c}(s)$ satisfy $k(s)\ge |\tilde{k}(s)|\ge 0$, the curvatures of $\mathcal{P}_c(s)$ and $\tilde{\mathcal{P}}_{\tilde{c}}(s)$ satisfy
${\bf k}(s)\ge 0$ and ${\bf k}(s)\ge |\tilde{{\bf k}}|(s).$
\end{proposition}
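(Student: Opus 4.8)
The plan is to reduce the inequality ${\bf k}(s)\ge |\tilde{{\bf k}}|(s)$ to a pointwise comparison of the numerators in the curvature formula \eqref{eq:34}, exploiting the fact that $\mathcal{P}_c=R(s)c(s)$ and $\tilde{\mathcal{P}}_{\tilde{c}}=R(s)\tilde{c}(s)$ share the same radial factor $R(s)$. Since $|c|=|\tilde{c}|=1$ and $|c'|=|\tilde{c}'|=1$, both curves satisfy $|\mathcal{P}'_c|=|\tilde{\mathcal{P}}'_{\tilde{c}}|=\sqrt{(R')^2+R^2}$, so the denominators in \eqref{eq:34} coincide and it suffices to show $|\mathcal{P}'_c\times \mathcal{P}''_c|\ge |\tilde{\mathcal{P}}'_{\tilde{c}}\times \tilde{\mathcal{P}}''_{\tilde{c}}|$.

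First I would differentiate $\mathcal{P}_c=Rc$ twice and express everything in the orthonormal frame $\{c,T,V\}$ using Proposition \ref{prop:31}. This gives $\mathcal{P}'_c=R'c+RT$ and, after substituting $T'=kV-c$ from \eqref{eq:32}, $\mathcal{P}''_c=(R''-R)c+2R'T+RkV$. Computing the cross product in this right-handed frame (with $c\times T=V$, $T\times V=c$, $V\times c=T$) yields
$$
\mathcal{P}'_c\times \mathcal{P}''_c = R^2 k\, c - RR'k\, T + \bigl(2(R')^2-RR''+R^2\bigr)V,
$$
so that
$$
|\mathcal{P}'_c\times \mathcal{P}''_c|^2 = R^2 k^2\bigl(R^2+(R')^2\bigr) + \bigl(2(R')^2-RR''+R^2\bigr)^2.
$$

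The key observation is that the same computation applies verbatim to $\tilde{\mathcal{P}}_{\tilde{c}}=R\tilde{c}$, because $\tilde{c}$ is also a curve in $\mathbb{S}^2$ and hence obeys the identical frame equations of Proposition \ref{prop:31} with $\tilde{k}$ and the frame $\{\tilde{c},\tilde{T},\tilde{V}\}$ in place of $k$ and $\{c,T,V\}$. Crucially, the $V$-component coefficient $2(R')^2-RR''+R^2$ depends only on the shared radial factor $R$ and not on the curvature, so it is identical for both curves. Subtracting gives
$$
|\mathcal{P}'_c\times \mathcal{P}''_c|^2 - |\tilde{\mathcal{P}}'_{\tilde{c}}\times \tilde{\mathcal{P}}''_{\tilde{c}}|^2 = R^2\bigl(k^2-\tilde{k}^2\bigr)\bigl(R^2+(R')^2\bigr) \ge 0,
$$
where the final inequality uses $k\ge |\tilde{k}|\ge 0$. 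Together with the matching denominators this yields ${\bf k}^2\ge \tilde{{\bf k}}^2$, and ${\bf k}\ge 0$ follows from the convexity established in Proposition \ref{prop:32}.

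The main point requiring care, rather than a genuine obstacle, is verifying the clean separation of the cross product into a curvature-dependent part (from the $c$- and $T$-components, proportional to $k^2$) and a curvature-independent part (the $V$-component, depending only on $R$); once this separation is confirmed, the comparison is immediate. One should also double-check the sign conventions in the frame equations and confirm that the arc-length identity forces the denominators to agree, both of which are guaranteed by $\tilde{c}$ lying on the same unit sphere as $c$.
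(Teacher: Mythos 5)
Your proposal is correct and follows essentially the same route as the paper's own proof: the same reduction to comparing the cross-product numerators after noting $|\mathcal{P}'_c|=|\tilde{\mathcal{P}}'_{\tilde{c}}|$, the same frame computation via Proposition \ref{prop:31}, and the same separation into a curvature-dependent part $\bigl(R^4+(RR')^2\bigr)k^2$ and a curvature-independent $V$-component. One minor remark: your $V$-coefficient $2(R')^2-RR''+R^2$ disagrees with the paper's $2(R')^2-2RR''+R^2$ (the paper appears to count the $-RR''V$ term twice, so your version is in fact the correct one), but since this term is identical for both curves it drops out of the comparison and the conclusion is unaffected either way.
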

\begin{proof} Since $\mathcal{P}_c(s)$ is convex, we have that ${\bf k}(s)\ge 0$, it suffices to show that  ${\bf k}^2(s)\ge \tilde{{\bf k}}^2(s)$. First we observe that
$|\mathcal{P}'_c(s)|^2=R^2(s)+(R'(s))^2=|\tilde{\mathcal{P}}'_{\tilde{c}}(s)|^2$. This reduces the desired  estimate to
\begin{equation}\label{eq:de1}
|\mathcal{P}'_c(s)\times \mathcal{P}''_c(s)|^2\ge |\tilde{\mathcal{P}}'_{\tilde{c}}(s)\times \tilde{\mathcal{P}}''_{\tilde{c}}(s)|^2.
\end{equation}
Using the fact that $\{c(s), T(s), V(s)\}$ forms an oriented orthonormal moving frame, a direct calculation, using Proposition \ref{prop:31}, shows that
\begin{eqnarray*}
\mathcal{P}'_c(s)\times \mathcal{P}''_c(s)&=& (R'(s) c(s)+R(s) T(s))\times (R''(s) c(s) +2 R'(s) T(s) + R(s) T'(s))\\
&=& ( 2 (R'(s))^2 -R(s)R''(s))V(s)- R'(s) R(s) k(s) \, T(s)\\
&\,&+R^2(s) k(s) c(s) +R^2(s) V(s)\\
&=&R^2(s) k(s) c(s)- R'(s) R(s) k(s) \, T(s)\\
&\,&+(2 (R'(s))^2 -R(s)R''(s)+R^2(s))V(s).
\end{eqnarray*}
Hence we have that
\begin{eqnarray}
|\mathcal{P}'_c(s)\times \mathcal{P}''_c(s)|^2&=&(R^4(s)+(R'(s) R(s))^2)k^2(s)\nonumber\\
&\,&+(2 (R'(s))^2 -R(s)R''(s)+R^2(s))^2. \label{eq:37}
\end{eqnarray}
A similar calculation shows that
\begin{eqnarray}
|\tilde{\mathcal{P}}'_{\tilde{c}}(s)\times \tilde{\mathcal{P}}''_{\tilde{c}}(s)|^2&=&(R^4(s)+(R'(s) R(s))^2)\tilde{k}^2(s)\nonumber\\
&\,&+(2 (R'(s))^2 -R(s)R''(s)+R^2(s))^2. \label{eq:38}
\end{eqnarray}
From (\ref{eq:37}) and (\ref{eq:38}), the assumption $k(s)\ge |\tilde{k}|(s)$ implies (\ref{eq:de1}), hence the desired estimate of the proposition.
\end{proof}

Now Proposition \ref{prop:33} and (\ref{eq:35}) implies that $\mathcal{P}_c(\tau)$ and $\tilde{\mathcal{P}}_{\tilde{c}}(\tau)$ are two curves satisfying the assumption of Theorem \ref{thm:main1}. Hence we have that
$$
d_{\mathbb{R}^3}(\mathcal{P}_c(0), \mathcal{P}_c(\tau(L))) \le d_{\mathbb{R}^3}(\tilde{\mathcal{P}}_{\tilde{c}}(0), \tilde{\mathcal{P}}_{\tilde{c}}(\tau(L))).
$$
Theorem \ref{thm:main2} for the smooth curves now follows from the hinge theorem of the Euclidean geometry.

For the general case when the tangents of $c(s)$ and $\tilde{c}(s)$ have finite jumps at $\{s_j\}$, if we denote the turning angles at $\mathcal{P}_c(s_j)$ and  $\tilde{\mathcal{P}}_{\tilde{c}}(s_j)$ by $\theta_j$ and $\tilde{\theta}_j$, then
\begin{equation}\label{eq:310}
\cos \theta_j= \frac{ R'(s_j-) R'(s_j+) +R^2(s_j) \cos \alpha_j}{\sqrt{\left((R'(s_j-))^2+R^2(s_j)\right)\left((R'(s_j+))^2+R^2(s_j)\right)}}.
\end{equation}
By a similar formula for $\cos \tilde{\theta_j}$ we deduce that $\theta_j\ge \tilde{\theta}_j$ if $\alpha_j\ge \tilde{\alpha}_j$.
Hence Theorem \ref{thm:main2} follows from the general case of Theorem \ref{thm:main1}.

The argument can be modified to prove the following more general result.

\begin{theorem}\label{thm:32} Let $c(s): [0, L]\to \mathbb{S}^2$ (with $L\le \pi)$ be a simple piece wisely smooth curve in $\mathbb{S}^2$ parametrized by the arc length, such that its tangent has finite many discontinuities at $\{s_j\}_{j=1}^{N-1}$. Assume that the curve is convex. Namely with a positive orientation, the turning angle of the tangent at $s_j$ from $T(s_j-)$ to $T(s_j+)$ counter clock wisely is $\alpha_j\in (0, \pi)$ and the geodesic curvature $k(s)$ of $c(s)\in \mathbb{S}^2$ over smooth $s$ is nonnegative. Assume that $\tilde{c}(s):[0, L]\to \mathbb{S}^n$ is another simple piece wisely smooth curve satisfying that  the  only possible discontinuities of  its tangent $\tilde{T}(s)$ are at $\{s_j\}$, and the angle $\tilde{\alpha}_j$ between $T(s_j-)$ and $T(s_j+)$  satisfies $\tilde{\alpha}_j\le \alpha_j$. Moreover assume that the geodesic curvature $\tilde{k}(s)$ satisfies that $0\le \tilde{k}(s)\le k(s)$. Then we have that $d_{\mathbb{S}^2}(c(0), c(L))\le d_{\mathbb{S}^n}(\tilde{c}(0), \tilde{c}(L))$. The equality holds if and only if $\tilde{c}(s)$ is congruent to $c(s)$ after a proper linear isometric embedding of $\mathbb{S}^2$ into $\mathbb{S}^n$.
\end{theorem}

To prove this general result we need modified versions of Proposition \ref{prop:31}--\ref{prop:33} for the curves $\tilde{c}(s)$ in $\mathbb{S}^n$ and the corresponding curve $\tilde{\mathcal{P}}_{\tilde{c}}(s)= R(s)\cdot \tilde{c}(s)$ in  $\mathbb{R}^{n+1}$.

The modified Proposition \ref{prop:31}  for $\tilde{c}(s)\in \mathbb{S}^n$ amounts to properly defining $\tilde{k}(s)$. Let $\frac{D \tilde{T}}{ds} $ denote the covariant derivative of the tangent $\tilde{T}$ as a vector in $T_{\tilde{c}(s)}\mathbb{S}^n$. Then we have that
$$
\frac{D \tilde{T}}{ds}=\tilde{T}'(s)-\langle \tilde{T}'(s), \tilde{c}(s)\rangle=\tilde{T}'(s)+\tilde{c}(s).
$$
Here we also used
$$
0=\frac{1}{2}\frac{d^2}{ds^2}|\tilde{c}(s)|^2=\langle \tilde{T}'(s), \tilde{c}(s)\rangle+|\tilde{T}(s)|^2.
$$
 We define $\tilde{k}(s)=\left|\frac{D \tilde{T}}{ds}\right|$. When $\frac{D \tilde{T}}{ds}\ne 0$ we may write it as $\tilde{k}(s) \tilde{V}(s)$ with $\tilde{V}(s)$ being a unit vector which is normal to both $\tilde{c}(s)$, and $\tilde{T}(s)$. Given that $k(s)\ge 0$, the assumption $\tilde{k}(s)\le k(s)$ makes sense. With this convention we do have that
\begin{equation}\label{eq:311}
\tilde{c}'(s)= \tilde{T}(s);  \quad \tilde{T}'(s)=\tilde{k}(s) \tilde{V}(s)-\tilde{c}(s)
\end{equation}
which are sufficient for the comparison of the curvatures of the constructed curves $\mathcal{P}_c(s)$ and $\tilde{\mathcal{P}}_{\tilde{c}}(s)$ in $\mathbb{R}^3$ and $\mathbb{R}^{n+1}$. Note that $\tilde{V}(s)$ is only defined locally when $|\frac{D \tilde{T}}{ds}|(s)\ne 0$, and $\{\tilde{c}(s), \tilde{T}(s), \tilde{V}(s)\}$ are orthonormal when $\tilde{V}$ is defined.

The key is a generalization of Proposition \ref{prop:32} since for this general case we can not use a formula involving the cross product to compute the curvature of a curve in $\mathbb{R}^{n+1}$.

\begin{proposition}\label{prop:34} Let $r(s): (a, b)\to \mathbb{R}^{n+1}$ be a regular curve and let $\tau$ be the arc-length parameter.    Let ${\bf k}(s)$ be its curvature (defined as $\sqrt{\langle \frac{d^2 r}{d\tau^2} (\tau), \frac{d^2 r}{d\tau^2}(\tau)\rangle}$). Then
\begin{equation}\label{eq:312}
{\bf k}^2(s)=\frac{ \langle r''(s), r''(s)\rangle\langle r'(s), r'(s)\rangle-\langle r''(s), r'(s)\rangle^2}{\langle r'(s), r'(s)\rangle^3}.
\end{equation}
\end{proposition}
\begin{proof} Note that $\frac{d\tau}{ds}=|r'|$, $\frac{d r}{d\tau}=r' \frac{ds}{d\tau}=r'/|r'|$. Hence we have that
\begin{eqnarray*}
\frac{d^2 r}{d\tau^2}&=& \frac{d}{ds}\left(\frac{d r}{d\tau}\right) \frac{ds}{d\tau}= \frac{d}{ds}\left(r'/|r'|\right) |r'|^{-1}\\
&=&\frac{r''|r'|^2-\langle r'', r'\rangle r'}{\langle r', r'\rangle^{2}}.
\end{eqnarray*}
Using the definition of ${\bf k}(s)$ we have that
\begin{eqnarray*}
{\bf k}^2(s)&=&\langle \frac{d^2 r}{d\tau^2}, \frac{d^2 r}{d\tau^2}\rangle \\
&=& \frac{|r''|^2|r'|^4-\langle r'', r'\rangle^2 |r'|^2}{|r'|^8}
=\frac{|r''|^2|r'|^2-\langle r'', r'\rangle^2}{\langle r', r'\rangle^{3}}.
\end{eqnarray*}
This proves the formula claimed.
\end{proof}

Now we prove a similar comparison result as Proposition \ref{prop:33}.

 \begin{proposition}\label{prop:35}  Under the assumption that the geodesic curvature $k(s)$ of $c(s)$ and the geodesic curvature $\tilde{k}(s)$ of $\tilde{c}(s)$ satisfy $k(s)\ge \tilde{k}(s)\ge 0$, the curvatures of $\mathcal{P}_c(s)$ and $\tilde{\mathcal{P}}_{\tilde{c}}(s)$ (as curves in $\mathbb{R}^2$ and in $\mathbb{R}^{n+1}$)  satisfy
${\bf k}(s)\ge 0$ and ${\bf k}(s)\ge |\tilde{{\bf k}}|(s).$
\end{proposition}
\begin{proof}
The only difference is we use equation (\ref{eq:312}) to compute the curvature  $ |\tilde{{\bf k}}|^2(s)$ of $\tilde{\mathcal{P}}_{\tilde{c}}(s)$.
To simplify the notation we denote $\tilde{\mathcal{P}}_{\tilde{c}}(s)$ by $r(s)$, which is given by $R(s)\cdot \tilde{c}(s)$.  Clearly $r'(s)=R'(s) \tilde{c}(s)+R(s) \tilde{c}'(s)=R'(s) \tilde{c}(s)+R(s) \tilde{T}(s)$, and by (\ref{eq:311})
\begin{equation}\label{eq:313}
r''=(R''-R ) \tilde{c} +2R' \tilde{T}+R \tilde{k} \tilde{V}.
\end{equation}
Note that the above also holds when $\tilde{k}=0$. Now direct calculation shows
\begin{eqnarray*}
&\,&\langle r'', r''\rangle=(R''-R)^2 +4(R')^2 +R^2 \tilde{k}^2;\\
&\,& \langle r', r'\rangle =(R')^2+R^2;\\
&\,& \langle r'', r'\rangle^2=\left( (R''-R)R'+2R'R\right)^2.
\end{eqnarray*}
Putting the above  together,  Proposition \ref{prop:34} implies that
$$
\left((R')^2+R^2\right)^3 |\tilde{{\bf k}}|^2(s)=\left( (R''-R)R-2(R')^2\right)^2 +R^2 \tilde{k}^2((R')^2+R^2).
$$
This together with (\ref{eq:34}), the fact that $|\mathcal{P}'_c(s)|^2=R^2(s)+(R'(s))^2$, and (\ref{eq:37}) proves the claim.
\end{proof}

Now Theorem \ref{thm:32} follows from Theorem \ref{thm:main1} (precisely (\ref{eq:11}) applied to $\mathcal{P}_c(s)$ and $\tilde{\mathcal{P}}_{\tilde{c}}(s)$), Proposition \ref{prop:35}, (\ref{eq:310}) and the argument  of proving Theorem \ref{thm:main2}.

 \section*{Acknowledgments} {} The author would like to thank Burkhard Wilking for helpful discussions, Paul Bryan, Jon Wolfson, H. Wu and Fangyang Zheng for their interests to the problem considered. He is also grateful to a  referee who spotted a discrepancy in an earlier version of the paper. The author also thanks Pak-Yeung Chan who read a later version and made some  comments which helped the presentation.


\begin{thebibliography}{A}






\bibitem{Alex} A. D. Alexandrow,\textit{ Konvexe Polyeder.} German translation from Russian; Akademie-Verlag,
Berlin, 1958.


\bibitem{An} B. Andrews and J.  Clutterbuck, \textit{Proof of the fundamental gap conjecture.}  J. Amer. Math. Soc. \textbf{24} (2011), no. 3, 899--916.

\bibitem{Arnold} V.I. Arnold, \textit{ The geometry of spherical curves and the algebra of quaternions.} Uspekhi Mat. Nauk \textbf{50}(1995), 3--68.

\bibitem{Busemann} H. Busemann, \textit{Convex surfaces.} Interscience Tracts in Pure and Applied Mathematics, no. {\bf 6}. Interscience Publishers, Inc., New York; Interscience Publishers Ltd., London 1958 ix+196 pp.

\bibitem{Cauchy} A. Cauchy, \textit{Sur les polygones et les poly\`edres.} Second M\'emoire, Oeuvres Compl\`etes, II$^e$
S\'erie, vol. 1; Paris, 1905.


\bibitem{Chern} S. S. Chern, \textit{ Curves and surfaces in Euclidean space.} 1967 Studies in Global Geometry and Analysis pp. 16--56 Math. Assoc. America, Buffalo, N.Y.; distributed by Prentice-Hall, Englewood Cliffs, N.J.

\bibitem{Epstein} C. L. Epstein, \textit{The theorem of A Schur in hyperbolic spaces.}  Preprint 46 pages, 1985.


\bibitem{Hopf} H. Hopf, \textit{Differential Geometry in the Large.} Notes taken by Peter Lax and John Gray. With a preface by S. S. Chern. Second edition. With a preface by K. Voss. Lecture Notes in Mathematics, 1000. Springer-Verlag, Berlin, 1989. viii+184 pp.

\bibitem{Lopez} R. L\'opez, \textit{The theorem of Schur in the Minkowski plane.} Jour. Geom. Phys. \textbf{61} (2011), 342--346.


\bibitem{Ni-JMPA} L. Ni, \textit{Estimates on the modulus of expansion for vector fields solving nonlinear equations.} J. Math. Pures Appl. (9) \textbf{99} (2013), no. 1, 1--16.


\bibitem{Pogo} A. V. Pogorelov, \textit{Differential Geometry.} P. Noodhoff N. V. 1960.




\bibitem{SR} E. Steinitz and H. Rademacher, \textit{Vorlesungen \"uber die Th\'eorie der Polyeder.} Springer-Verlag,
Berlin, 1934.


\bibitem{SZ}  I. J. Schoenberg and  S. C. Zaremba, \textit{
On Cauchy's lemma concerning convex polygons.}
Canadian J. Math. \textbf{19} (1967), 1062--1071.

\bibitem{Struik} D. J. Struik, \textit{Lectures on Classical Differential Geometry.} 2nd Edition, Dover, 1988.


\end{thebibliography}
\end{document}